\documentclass[11pt]{amsart}

\usepackage{amssymb}
\usepackage{mathtools}
\usepackage{delimset}
\usepackage{microtype}
\usepackage{enumitem}
\setlist[itemize]{
labelindent=0pt,
leftmargin=*,
itemsep=.35\baselineskip,
topsep=.15\baselineskip,
partopsep=0pt,
parsep=0pt
}
\setlist[enumerate]{
labelindent=0pt,
leftmargin=*,
align=left,
itemsep=.35\baselineskip,
topsep=.15\baselineskip,
partopsep=0pt,
parsep=0pt
}
\usepackage{lmodern}
\usepackage[T1]{fontenc}
\usepackage[utf8]{inputenc}
\usepackage[numbers,sort&compress,longnamesfirst]{natbib}
\usepackage{etoolbox}
\usepackage{hyphenat}
\makeatletter
\patchcmd{\NAT@citexnum}
{\@citea \NAT@test{\@ne}\NAT@spacechar\NAT@mbox{\NAT@super@kern\NAT@@open}}
{\@citea \nohyphens{\NAT@test{\@ne}}\nobreakspace{}\NAT@mbox{\NAT@super@kern\NAT@@open}}
{}
{\PackageError{nonbreaking-citet}
{Failed to patch \string\citet}
{The internal definition of natbib may have changed.}}
\makeatother

\usepackage[pagebackref,colorlinks=true,linkcolor=blue,citecolor=blue,urlcolor=blue]{hyperref}
\usepackage{aliascnt}

\usepackage[capitalize,noabbrev,nameinlink,sort&compress]{cleveref}

\crefname{itm}{Condition}{Conditions}
\creflabelformat{itm}{~\upshape(#2#1#3)}
\crefname{def}{Def.}{Defs.}
\Crefname{def}{Definition}{Definitions}
\creflabelformat{def}{~\upshape(#2#1#3)}
\crefname{ineq}{Ineq.}{Ineqs.}
\Crefname{ineq}{Inequality}{Inequalities}
\creflabelformat{ineq}{~\upshape(#2#1#3)}
\crefname{conjecture}{Conjecture}{Conjectures}
\crefname{appendix}{Appendix}{Appendices}
\crefname{section}{\S}{\S\S}
\Crefname{section}{Section}{Sections}
\crefname{subsection}{\S}{\S\S}
\Crefname{subsection}{Section}{Sections}
\AddToHook{begindocument/end}{%
\crefformat{section}{#2\S#1#3}
\Crefformat{section}{#2Section~#1#3}
\crefrangeformat{section}{#3\S\S#1#4--#5#2#6}
\Crefrangeformat{section}{#3Sections~#1#4--#5#2#6}
\crefmultiformat{section}
{#2\S\S#1#3}
{ and~#2#1#3}
{,~#2#1#3}
{, and~#2#1#3}
\Crefmultiformat{section}
{#2Sections~#1#3}
{ and~#2#1#3}
{,~#2#1#3}
{, and~#2#1#3}
\crefrangemultiformat{section}
{#3\S\S#1#4--#5#2#6}
{ and~#3#1#4--#5#2#6}
{,~#3#1#4--#5#2#6}
{, and~#3#1#4--#5#2#6}
\Crefrangemultiformat{section}
{#3Sections~#1#4--#5#2#6}
{ and~#3#1#4--#5#2#6}
{,~#3#1#4--#5#2#6}
{, and~#3#1#4--#5#2#6}
\crefformat{subsection}{#2\S#1#3}
\Crefformat{subsection}{#2Section~#1#3}
\crefrangeformat{subsection}{#3\S\S#1#4--#5#2#6}
\Crefrangeformat{subsection}{#3Sections~#1#4--#5#2#6}
\crefmultiformat{subsection}
{#2\S\S#1#3}
{ and~#2#1#3}
{,~#2#1#3}
{, and~#2#1#3}
\Crefmultiformat{subsection}
{#2Sections~#1#3}
{ and~#2#1#3}
{,~#2#1#3}
{, and~#2#1#3}
\crefrangemultiformat{subsection}
{#3\S\S#1#4--#5#2#6}
{ and~#3#1#4--#5#2#6}
{,~#3#1#4--#5#2#6}
{, and~#3#1#4--#5#2#6}
\Crefrangemultiformat{subsection}
{#3Sections~#1#4--#5#2#6}
{ and~#3#1#4--#5#2#6}
{,~#3#1#4--#5#2#6}
{, and~#3#1#4--#5#2#6}
}

\AddToHook{env/conjecture/begin}{\crefalias{theorem}{conjecture}}
\AddToHook{env/lemma/begin}{\crefalias{theorem}{lemma}}
\AddToHook{env/proposition/begin}{\crefalias{theorem}{proposition}}

\makeatletter
\newcommand\creflabel[2][\@currentcounter]{%
 \crefalias{\@currentcounter}{#1}\label{#2}}
\makeatother

\crefname{equation}{Eq.}{Eqs.}
\crefformat{equation}{Eq.~(#2#1#3)}
\crefmultiformat{equation}{Eqs.~(#2#1#3)}{ and~(#2#1#3)}{,~(#2#1#3)}{, and~(#2#1#3)}
\crefrangeformat{equation}{Eqs.~(#3#1#4)--(#5#2#6)}

\hypersetup{
pdftitle={A classification of e-positive complete multipartite graphs},
pdfauthor={Ariel Y. Sun, David G.L. Wang, and Watson Z.Y. Wang},
pdfkeywords={chromatic symmetric function, e-positivity, complete multipartite graph, acyclic orientation, Schur positivity}
}

\numberwithin{equation}{section}
\newtheorem{theorem}{Theorem}[section]
\newtheorem{lemma}[theorem]{Lemma}
\newtheorem{proposition}[theorem]{Proposition}
\newtheorem{corollary}[theorem]{Corollary}
\theoremstyle{definition}

\theoremstyle{remark}

\newcommand{\coeff}[2]{\brk[s]!{#1}#2}
\DeclareRobustCommand{\bibauthorlink}[2]{\hyperref[ref:#1]{#2}}

\allowbreak
\allowdisplaybreaks

\title{An $e$-positive classification for complete multipartite graphs}
\author{Ariel Y. Sun}
\address{School of Mathematics and Statistics, Beijing Institute of Technology,
Beijing 102400, China}
\email{ariel@bit.edu.cn}

\author{David G.L. Wang\textsuperscript{*}}
\address{School of Mathematics and Statistics, Beijing Institute of Technology,
Beijing 102400, China}
\email{glw@bit.edu.cn}
\thanks{\textsuperscript{*}Corresponding author.}
\thanks{The work of David G.L. Wang was supported by the General Program of the
National Natural Science Foundation of China (Grant No.~12171034).}

\author{Watson Z.Y. Wang}
\address{School of Mathematics and Statistics, Beijing Institute of Technology,
Beijing 102400, China}
\email{watsonzywang@bit.edu.cn}

\subjclass[2020]{Primary 05E05; Secondary 05C15, 05A15.}
\keywords{Chromatic symmetric function, $e$-positivity, complete multipartite
graph, acyclic orientation, Schur positivity.}
\date{}

\begin{document}

\begin{abstract}
Shelburne and van Willigenburg (arXiv:2604.26158) characterize the Schur-positive complete multipartite graphs and leave open whether the graphs~$G=K_{(3,\,2^\beta)}$ are $e$-positive. We resolve this question and, together with their classification, characterize all $e$-positive complete multipartite graphs. Our main result is an explicit, manifestly nonnegative $e$-expansion of~$X_G$ whose coefficients are expressed in terms of the restricted-injection numbers. Our main idea is to derive a marker-variable coefficient-extraction formula for the $e$-coefficients of arbitrary complete multipartite graphs from the elementary--monomial Cauchy identity. For the particular graph~$G$, this formula reduces the proof to three coefficient families, which we evaluate using Dickson polynomials and recurrences for these numbers.
\end{abstract}

\maketitle

\section{Introduction}

In 1995, \citet{stanley1995} introduced the chromatic symmetric function~$X_G$, a symmetric-function refinement of the chromatic polynomial that records substantially more information about a graph than its chromatic polynomial alone. Positivity questions for~$X_G$ quickly became central, partly because of their connections with algebraic combinatorics and representation theory. In particular, Stanley reformulated the Stanley--Stembridge conjecture, introduced by \citet{stanley-stembridge1993}, as the assertion that the chromatic symmetric function of the incomparability graph of every $(3+1)$-free poset is~$e$-positive. A recent preprint of \citet{Hik24X} proves the conjecture; see also \citet{hikita2025} for a short conference account. The present paper does not rely on this work.

Beyond the Stanley--Stembridge conjecture, the broader problem of determining which graphs have~$e$-positive chromatic symmetric functions has remained central and active. \citet[Section~5]{stanley1995} formulated the general characterization problem:
\begin{equation*}
\textit{``Which $X_G$ is $e$-positive?''}
\end{equation*}
An early advance was made by \citet{gebhard-sagan2001}, who used chromatic symmetric functions in noncommuting variables to establish positivity results for $K$-chains. Next, \citet{shareshian-wachs2016} developed the theory of chromatic quasisymmetric functions for natural unit interval graphs. \citet{dahlberg-willigenburg2018} then obtained~$e$-positive expansions for lollipop and lariat graphs, and \citet{alexandersson2021} studied the~$e$-positivity of melting lollipop graphs and related unicellular LLT polynomials. Subsequently, \citet{wang-wang2023} studied the~$e$- and Schur-positivity of spiders and broom trees. \citet{qi-tang-wang2025} studied chromatic symmetric functions of conjoined graphs, while \citet{tom2025signed} derived signed and positive~$e$-expansions for natural unit interval graphs and $K$-chains. \citet{tom-vailaya2025glued} studied graphs glued at a single vertex, \citet{wang2025cyclechords} proved that all cycle-chord graphs are~$e$-positive, and \citet{wang-zhou2025} developed a composition method for chromatic symmetric functions. For further background, see the introduction of \citet{gong-wang-zhang2025} and the references therein. \citet{chen-he-wang2026} proved that clocks are~$e$-positive.

The Stanley--Gasharov conjecture asserts that every claw-free graph is Schur-positive. It was posed by \citet{stanley1998}, who attributed it to \citeauthor{gasharov1999}; see also \citet{gasharov1999}. The conjecture remains open. Beyond graph families already known to be~$e$-positive, relatively few families have been proved Schur-positive. \citet{shelburne-willigenburg2025} established Schur positivity for generalized net graphs, and \citet{thibon-wang2023} did so for the spiders~$S(a,\,2,\,1)$ and $S(a,\,4,\,1)$. For graph families that may contain claws, \citet{wang-wang2020} characterized the Schur-positive complete bipartite and tripartite graphs, and \citet[Theorems~4.7 and 4.18]{shelburne-willigenburg2026} extended this classification to all complete multipartite graphs. For a partition~$\lambda$, let~$K_\lambda$ denote the complete multipartite graph whose partite-set sizes are the parts of~$\lambda$.

\begin{theorem}[\citeauthor{shelburne-willigenburg2026}]
\label[theorem]{thm:shelburne-van-willigenburg} Let $K_\lambda$ be a complete multipartite graph with~$\ell(\lambda)\geq 2$. Then the following statements hold.
\begin{enumerate}[label=\textup{(\roman*)}]
\item If every part of $\lambda$ belongs to $\{1,2\}$, then $K_\lambda$ is~$e$-positive.
\item The graph~$K_\lambda$ is Schur-positive if and only if every part of $\lambda$ belongs to $\{1,2\}$ or~$\lambda=(3,\,2^\beta)$ for some $\beta\geq 1$.
\end{enumerate}
\end{theorem}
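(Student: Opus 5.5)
This is a cited result of Shelburne and van Willigenburg, so my plan reconstructs its proof from standard tools rather than from results earlier in this paper.

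\emph{Part (i).} When every part of $\lambda$ lies in $\{1,2\}$, the complement of $K_\lambda$ is a perfect matching on the parts of size~$2$ together with isolated vertices. Hence $K_\lambda$ is the incomparability graph of a poset that is a disjoint union of chains of length at most~$2$. Such a poset is $(3+1)$-free, and indeed $(2+2)$-free as well, since any induced $2+2$ would need two disjoint $2$-chains $a<b$, $c<d$ with no further relations, which is impossible when all comparabilities come from a matching. Rather than invoking the full Stanley--Stembridge theorem, I would appeal to the Guay-Paquet / Gasharov-type result for unit interval orders. The graph $K_\lambda$ is co-bipartite with complement a matching, so it is a natural unit interval graph, and for these $e$-positivity of $X_G$ is classical. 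Alternatively, I can compute directly. A proper colouring of $K_\lambda$ assigns distinct colours across different parts, and inside a part of size~$2$ it assigns either one colour or two distinct colours. With $a$ singleton parts and $b$ doubleton parts, this gives
\[
X_{K_\lambda} = \sum_{j=0}^{b} \binom{b}{j}\, 2^{j}\,(a+2b-j)!\; \widetilde m_{2^j1^{a+2b-2j}}\cdot(\text{normalisation}),
\]
which I would rewrite using $\widetilde m_{2^j1^{r}} = \sum(\cdots)$ in terms of the $e_{2^i1^{\cdot}}$ via $e_2=\widetilde m_{11}/2$-type identities. The aim is to verify a closed form with nonnegative coefficients, $X_{K_{(2^b,1^a)}}=\sum_i c_i\, e_{(a+2b-i,\,i)}$-style, by induction on~$b$ using deletion--contraction on a matching edge of the complement.

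\emph{Part (ii), the ``if'' direction.} For parts in $\{1,2\}$, $e$-positivity gives Schur positivity. For $\lambda=(3,2^\beta)$, I would expand $X_G$ in monomials. The coefficient of $\widetilde m_\mu$ counts stable set partitions, and stable sets are subsets of single parts, so this count is a product of set-partition numbers of the parts. I would then apply Gasharov's $P$-tableau criterion in a weakened form, or compute the Schur coefficients via the Jacobi--Trudi/Kostka inversion and show nonnegativity by a direct sign-reversing involution on the special rim-hook tabloids restricted to at most one size-$3$ block.

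\emph{Part (ii), the ``only if'' direction.} This is the main obstacle. I would exhibit, for each other $\lambda$, one negative Schur coefficient. The natural candidates are $s_{(n-k,1^k)}$ hooks or two-row shapes, where $n=|\lambda|$. The coefficient of a hook can be read from the monomial expansion in closed form. Two cases arise. If $\lambda_1\ge4$, or if two parts are at least~$3$, the coefficient of $s_{(n-1,1)}$ or $s_{(n-2,2)}$ should be negative. If $\lambda=(3,\dots)$ contains a part $1$, I would test $s_{2^{\cdot}1^{\cdot}}$. The hardest step is choosing the right shape uniformly and proving the sign for all parameters, which I would do by explicit formulas in $\ell(\lambda)$ and the multiplicities.
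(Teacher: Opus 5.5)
The paper gives no proof of this statement. It is imported from Shelburne and van Willigenburg and used only in \cref{cor:classification}, so your outline has to stand on its own. As written, each of its three parts has a genuine gap.

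\textbf{Part (i).} Your claim that the poset is $(2+2)$-free is false: two matching edges $a<b$, $c<d$ induce exactly $2+2$. Equivalently, two parts of size~$2$ induce $K_{2,2}=C_4$, which is not chordal, so $K_{(2^b,1^a)}$ is not a unit interval graph once $b\ge2$. Moreover, $e$-positivity of natural unit interval graphs is not classical. It is the Stanley--Stembridge conjecture itself, settled only by Hikita; Gasharov's theorem gives Schur positivity only. Your fallback computation is not carried out either. The monomial expansion is simply $X_{K_\lambda}=\sum_j\binom{b}{j}\widetilde m_{2^j1^{n-2j}}$, with no factors $2^j(a+2b-j)!$. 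Chromatic symmetric functions also do not satisfy ordinary deletion--contraction, so the proposed induction has nothing driving it. A correct short argument: $\overline{K_\lambda}$ is a matching, hence triangle-free, and Stanley showed in 1995 that every graph with triangle-free complement is $e$-positive. Alternatively, your poset has no $3$-chain, hence is $(3+1)$-free, and Hikita's theorem applies.

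\textbf{Part (ii), the ``if'' direction.} For $\lambda=(3,2^\beta)$ you give no argument. Gasharov's $P$-tableaux require the incomparability graph of a $(3+1)$-free poset, i.e.\ a claw-free graph. But a vertex of a $2$-part together with the $3$-part induces a claw in $K_{(3,2^\beta)}$, and neither the ``weakened form'' nor the rim-hook involution is specified. Within this paper, this case follows from \cref{thm:e+.cmg}, whose proof does not use the statement.

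\textbf{Part (ii), the ``only if'' direction.} Your test shapes cannot work in general. Every block of a stable partition of $K_\lambda$ lies in one part, so $X_{K_\lambda}$ is a combination of $m_\nu$ with $\nu_1\le\lambda_1$. Each $m_\nu$ expands in $s_\mu$ with $\mu\le\nu$ in dominance order. Hence $[s_\mu]X_{K_\lambda}=0$ whenever $\mu_1>\lambda_1$. Thus the coefficients of $s_{(n-1,1)}$ and $s_{(n-2,2)}$ vanish as soon as $n-2>\lambda_1$, e.g.\ for $\lambda=(3,3)$, $(4,3)$, $(4,4)$.

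Stanley's observation that Schur positivity forces the set of stable-partition types to be closed downward in dominance handles many cases cheaply. For example, $(2,2,2)\le(3,3)$, $(3,3)\le(4,2)$, and $(2^{\beta+2},1^{\gamma-1})\le(3,2^\beta,1^\gamma)$ for $\gamma\ge1$ are not types. It does not handle all cases: for $\lambda=(4,3)$, every partition of~$7$ with largest part at most~$4$ is a type. There one needs an actual computation at a shape with $\mu_1\le\lambda_1$. Writing $X=X_{K_{(4,3)}}$, Jacobi--Trudi gives $[s_{(3,2,2)}]X=[m_{(3,2,2)}]X-[m_{(3,3,1)}]X-[m_{(4,2,1)}]X+[m_{(4,3)}]X=6-8-3+1=-4$. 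Your outline neither identifies such shapes uniformly nor proves their signs, and that is the real content of the ``only if'' direction.
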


\citet{shelburne-willigenburg2026} leave open the question of whether $K_{(3,\,2^\beta)}$ is~$e$-positive for every integer~$\beta\geq1$. We answer this question affirmatively as follows. For $d\ge1$, write $[d]=\{1,\dots,d\}$, and set $[0]=\varnothing$.

\begin{theorem}\label[theorem]{thm:e+.cmg}
Let $\beta \geq 1$. Then
\begin{align}
X_{K_{(3,\,2^\beta)}}
= \beta!\brk4(&\beta(2\beta+3)a_{\beta+1}e_{2\beta+3}
+ \beta\sum_{d=0}^{\beta-1}(2d+1)a_d
e_{(\beta+2+d,\,\beta+1-d)} \notag\\
&{}+ \brk1{a_{\beta+1}+(2\beta^2+\beta-1)a_\beta}e_{(2\beta+2,\,1)}
+ e_{(\beta+1,\,\beta+1,\,1)} \notag\\
&{}+ \sum_{d=2}^{\beta}
\brk1{(4d-3)a_{d-1}+a_{d-2}}
e_{(\beta+d+1,\,\beta+1-d,\,1)}\brk4),
\label{eq:main-expansion}
\end{align}
where, for $d\ge0$,
\begin{equation}\creflabel[def]{def:a}
a_d=\#\bigl\{\,f\colon[d]\hookrightarrow[2d]\;\colon\;
f(i)\ne i\text{ for every }i \in [d]\bigr\}.
\end{equation}
Consequently, the complete multipartite graph~$K_{(3,\,2^\beta)}$ is~$e$-positive.
\end{theorem}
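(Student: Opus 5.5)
We derive a general coefficient-extraction formula for the $e$-expansion of $X_{K_\lambda}$ and then specialize it to $\lambda=(3,2^\beta)$. The starting point is the stable-partition description of the chromatic symmetric function. For $K_\lambda$ with partite sets $V_1,\dots,V_\ell$, a proper coloring is the same as a choice, for each $i$, of a set partition of $V_i$ together with an injective assignment of colors to the blocks, with distinct colors across different parts. Summing over set partitions of each $V_i$ gives
\begin{equation*}
X_{K_\lambda}=\sum_{\pi_1,\dots,\pi_\ell}\ \tilde m_{\mathrm{type}(\pi_1)\cup\cdots\cup\mathrm{type}(\pi_\ell)},
\end{equation*}
where $\tilde m_\mu$ denotes the augmented monomial function $\bigl(\prod_j r_j(\mu)!\bigr)m_\mu$.

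Next we pass from monomials to elementary functions. The elementary--monomial Cauchy identity $\prod_{i,j}(1+x_iy_j)=\sum_\mu m_\mu(x)e_\mu(y)$ expresses the $e$-coefficient of a monomial-type sum as a coefficient extraction. Concretely, I would introduce a marker variable $t_i$ for each part size $\lambda_i$ and encode the sum over set partitions of $V_i$ as an exponential generating function $\exp\bigl(t_i(e^{u}-1)\bigr)$. The coefficient $[e_\nu]X_{K_\lambda}$ then becomes the coefficient of a monomial in an explicit product of the form
\begin{equation*}
\prod_k \bigl(1+\textstyle\sum_i t_i z^{k}\bigr)\ \text{(or its inverse-Kostka analogue)}.
\end{equation*}
The target is a formula of the shape $[e_\nu]X_{K_\lambda}=\lambda!\,[t^\lambda]\,\prod_j F_{\nu_j}(t)$, where each $F_n$ is a signed polynomial coming from expanding $h$ or $m$ into $e$.

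For $\lambda=(3,2^\beta)$, a part of size $2$ is either one block of size $2$ or two singletons, and the part of size $3$ has five set partitions. This should cut the relevant $\nu$ down to length at most $3$ and to the three families appearing in \eqref{eq:main-expansion}: the one-row term $e_{2\beta+3}$, the two-row terms $e_{(\beta+2+d,\beta+1-d)}$, and the terms with a trailing part $1$. I would verify directly that all other $e_\nu$ have vanishing coefficient. For each family the extraction reduces to an alternating sum over $k$, with binomials times factorials, of the form $\sum_k(-1)^k\binom{\beta}{k}(2\beta-k)!/(\beta-k)!$ with small shifts. By inclusion--exclusion on the fixed points $f(i)=i$, this is exactly
\begin{equation*}
a_d=\sum_k(-1)^k\binom dk\frac{(2d-k)!}{d!}.
\end{equation*}
Dickson-polynomial identities handle the $d$-dependent weights like $(2d+1)$.

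The main obstacle is this last identification: showing that the messy signed sums collapse to the stated combinations, such as $(4d-3)a_{d-1}+a_{d-2}$ and $a_{\beta+1}+(2\beta^2+\beta-1)a_\beta$. I would first derive a recurrence for $a_d$ of the form $a_d=c_1(d)a_{d-1}+c_2(d)a_{d-2}$, using the ordinary generating function or the Dickson parametrization. I would then prove each coefficient identity by checking that both sides satisfy the same recurrence in $\beta$ (or in $d$) and agree on initial values, with small cases such as $\beta=1,2$ computed by hand. Since every $a_d\ge0$ and all weights are positive, $e$-positivity follows at once.
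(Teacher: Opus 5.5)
You have the right ingredients in outline (marker variables, the elementary--monomial Cauchy identity, and the inclusion--exclusion formula for $a_d$). However, the step that turns the stable-partition expansion into $e$-coefficients is missing, and the extraction formula you aim for has the wrong shape.

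The paper encodes colors directly: a color $c$ is either unused or used $j$ times inside one partite set. With markers $u$ for the $3$-set and $v_1,\dots,v_\beta$ for the $2$-sets, this gives $X_{K_\lambda}=\lambda!\,[M]\prod_c\bigl(1+E_1x_c+E_2x_c^2+E_3x_c^3\bigr)$, where $M=u^3v_1^2\cdots v_\beta^2$, $E_1=u+\sum_iv_i$, $E_2=\frac12\bigl(u^2+\sum_iv_i^2\bigr)$ and $E_3=\frac16u^3$. Your factor $\prod_k(1+\sum_it_iz^k)$ lacks the weights $t_i^j/j!$ that record how many vertices of a partite set receive a given color.

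Read this way, the $m_\nu$-coefficient is $\lambda!\,[M]\,E_{\nu_1}E_{\nu_2}\cdots$. The decisive move is to regard this product as $e_\nu(y)$ for formal roots $y=(y_1,y_2,y_3)$ of $z^3-E_1z^2+E_2z-E_3$. Cauchy, $\sum_\nu m_\nu(x)e_\nu(y)=\sum_\nu e_\nu(x)m_\nu(y)$, then gives $[e_\nu]X_{K_\lambda}=\lambda!\,[M]\,m_\nu(y)$. This is not a product over the parts of $\nu$ (already $m_{(a,b)}=p_ap_b-p_{a+b}$). Product formulas of the kind $\prod_jF_{\nu_j}$ describe the $m$- or $p$-coefficients, not the $e$-coefficients.

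Your other two steps are asserted rather than argued.

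\emph{Support restriction.} Stable partitions of $K_{(3,2^\beta)}$ have types of every length from $\beta+1$ to $2\beta+3$. So the vanishing of $[e_\nu]X_{K_\lambda}$ for $\ell(\nu)\ge4$, and for $\ell(\nu)=3$ with $\nu_3\ge2$, is a cancellation phenomenon that listing set partitions of the parts does not explain. The paper gets it from the Sagan--Tom Alpha--Omega Lemma ($\alpha=3$, $\omega_2=2\beta+2$). It also follows from the formal-root formula: $m_\nu(y)=0$ when $\ell(\nu)>3$, and for $\ell(\nu)=3$ it is divisible by $E_3^{\nu_3}$, whose $u$-degree $3\nu_3$ exceeds $3$ once $\nu_3\ge2$.

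\emph{Evaluation.} This is the bulk of the proof, and you defer it to ``both sides satisfy the same recurrence'' without ever writing down the sums. The paper needs concrete machinery here:
\begin{itemize}
\item it works modulo $E_3^2$;
\item it uses $p_k(y)\equiv D_k+k\,\mathcal E_{k-3}E_3$, which is where Dickson polynomials enter (at $E_3=0$ two roots remain, with elementary symmetric functions $E_1,E_2$);
\item it derives an explicit first-order expansion of $m_{(a,b)}(y)$;
\item it uses the extraction rule $\lambda!\,[M]E_3^rE_2^sE_1^t=s!\,t!\bigl(\binom{\beta}{s}+3\binom{\beta}{s-1}\chi(r=0)\bigr)$ for $3r+2s+t=2\beta+3$.
\end{itemize}
The resulting alternating sums then collapse by specific cancellations. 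For the two-parameter family $(\beta+2+d,\beta+1-d)$, the four contributions telescope to $\beta(2d+1)\frac{\beta!}{d!}\sum_q(-1)^q\binom{d}{q}(2d-q)!$. The length-three coefficients with $d\ge1$ come out as $\beta!\,(a_d-a_{d-1})$. A one-variable recurrence check is not a substitute for this, especially for a family indexed by both $\beta$ and $d$.
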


The hook arrow in \cref{def:a} indicates that~$f$ is injective. The first few expansions are
\begin{align*}
X_{K_{(3,\,2)}}
&=35e_5+9e_{(4,\,1)}+e_{(3,\,2)}+e_{(2,\,2,\,1)},\\
X_{K_{(3,\,2,\,2)}}
&=1988e_7+268e_{(6,\,1)}+12e_{(5,\,2)}+4e_{(4,\,3)}
+12e_{(5,\,1,\,1)}+2e_{(3,\,3,\,1)},\\
X_{K_{(3,\,2,\,2,\,2)}}
&=162162e_9+14526e_{(8,\,1)}+630e_{(7,\,2)}+54e_{(6,\,3)}\\
&\quad{}+18e_{(5,\,4)}+384e_{(7,\,1,\,1)}+36e_{(6,\,2,\,1)}
+6e_{(4,\,4,\,1)}.
\end{align*}
The integers~$a_d$ count nonattacking rook placements on a deleted-diagonal board. The first few values are
\[
a_0=1,\qquad a_1=1,\qquad a_2=7,\qquad a_3=71,
\qquad a_4=1001,\qquad a_5=18089.
\]
See OEIS A002119 \cite{oeis2026} and \cref{sec:prelim-ad} for more details on the numbers~$a_d$.

Combining \cref{thm:e+.cmg} with \cref{thm:shelburne-van-willigenburg} yields a characterization of~$e$-positive complete multipartite graphs.

\begin{corollary}\label[corollary]{cor:classification}
Let $K_\lambda$ be a complete multipartite graph. Then the chromatic symmetric function~$X_{K_\lambda}$ is~$e$-positive if and only if at least one of the following holds:
\begin{enumerate}[label=\textup{(\roman*)}]
\item the partition~$\lambda$ has length one;
\item every part of the partition~$\lambda$ belongs to $\{1,2\}$;
\item the partition has the form~$\lambda=(3,\,2^\beta)$ for some integer~$\beta\ge1$.
\end{enumerate}
\end{corollary}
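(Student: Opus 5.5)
The plan is to combine the two earlier results, \cref{thm:shelburne-van-willigenburg} and \cref{thm:e+.cmg}, with two standard facts: the edgeless graph has an $e$-positive chromatic symmetric function, and $e$-positivity implies Schur positivity.

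\emph{Sufficiency.} We treat the three conditions separately.
\begin{itemize}
\item[(i)] Suppose $\ell(\lambda)=1$, say $\lambda=(n)$. Then $K_\lambda$ consists of a single partite set, so it is the edgeless graph on $n$ vertices. Every map from its vertices to the positive integers is a proper coloring, hence $X_{K_\lambda}=(x_1+x_2+\cdots)^n=e_1^n=e_{(1^n)}$, which is $e$-positive.
\item[(ii)] Suppose every part of $\lambda$ lies in $\{1,2\}$. If $\ell(\lambda)\ge2$, then $e$-positivity is exactly \cref{thm:shelburne-van-willigenburg}(i). If $\ell(\lambda)=1$, we are back in case~(i).
\item[(iii)] If $\lambda=(3,\,2^\beta)$ with $\beta\ge1$, then $e$-positivity is the final assertion of \cref{thm:e+.cmg}. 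That assertion follows from \cref{eq:main-expansion}, because every $a_d$ is a cardinality and hence nonnegative, and every coefficient there is a sum of products of such numbers with positive integers.
\end{itemize}

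\emph{Necessity.} Suppose $X_{K_\lambda}$ is $e$-positive and $\ell(\lambda)\ge2$; the case $\ell(\lambda)=1$ is condition~(i). We use that each $e_\mu$ is Schur-positive. Indeed, $e_\mu=\sum_\nu K_{\nu',\mu}s_\nu$ with nonnegative Kostka numbers, or equivalently $e_\mu=\prod_i s_{(1^{\mu_i})}$ is Schur-positive by the Littlewood--Richardson rule. Hence a nonnegative combination of the $e_\mu$ is Schur-positive, so $K_\lambda$ is Schur-positive. By \cref{thm:shelburne-van-willigenburg}(ii), either every part of $\lambda$ lies in $\{1,2\}$, which is condition~(ii), or $\lambda=(3,\,2^\beta)$ for some $\beta\ge1$, which is condition~(iii).

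There is no genuine obstacle here, since all the substantive work lies in \cref{thm:e+.cmg} and \cref{thm:shelburne-van-willigenburg}. The only points requiring care are bookkeeping. First, the hypothesis $\ell(\lambda)\ge2$ in \cref{thm:shelburne-van-willigenburg} forces us to handle the length-one case separately, which is why condition~(i) appears in the statement. Second, the Schur positivity of $e_\mu$ must be cited explicitly to pass from $e$-positivity to Schur positivity.
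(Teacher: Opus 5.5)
Your proposal is correct and matches the paper's proof: the length-one case is the edgeless graph with $X_{K_\lambda}=e_1^n$, sufficiency for conditions (ii) and (iii) comes from \cref{thm:shelburne-van-willigenburg}(i) and \cref{thm:e+.cmg}, and necessity follows because $e$-positivity implies Schur positivity, so \cref{thm:shelburne-van-willigenburg}(ii) applies. Your additions make explicit two points the paper leaves implicit: the Kostka-number justification that each $e_\mu$ is Schur-positive, and that length-one instances of condition (ii) reduce to condition (i).
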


\begin{proof}
If $\lambda=(n)$, then $K_\lambda$ is edgeless and $X_{K_\lambda}=e_1^n$ is~$e$-positive. Suppose that $\ell(\lambda)\ge2$. Since $e$-positivity implies Schur-positivity, necessity follows from \cref{thm:shelburne-van-willigenburg}(ii). Conversely, \cref{thm:shelburne-van-willigenburg}(i) proves case~\textup{(ii)}, and \cref{thm:e+.cmg} proves case~\textup{(iii)}.
\end{proof}

Complete multipartite chromatic symmetric functions have also been studied from a basis-theoretic viewpoint. \citet{crew-spirkl2021} treat the functions $X_{K_\lambda}$ as a basis of symmetric functions and give combinatorial interpretations for the transition coefficients between this basis and the monomial basis, as well as for coefficients of chromatic and Tutte symmetric functions in the complete multipartite basis. By contrast, \cref{thm:general-coefficient} extracts the elementary-basis coefficients of a fixed function~$X_{K_\eta}$ through exponential marking and the elementary--monomial Cauchy identity.

The rest of the paper is organized as follows. In \cref{sec:preliminaries}, we fix notation for partitions and symmetric functions, recall the graph-theoretic facts about chromatic symmetric functions that control the possible $e$-coefficients, and collect the properties of Dickson polynomials and the restricted-injection numbers~$a_d$ used later. In \cref{sec:coefficient-extraction}, we convert the chromatic symmetric function of a complete multipartite graph into a marker coefficient-extraction problem, specialize this formula to $K_{(3,\,2^\beta)}$, and reduce the calculation to the partitions that contribute. In \cref{sec:evaluation}, we evaluate these three families of coefficients and prove \cref{thm:e+.cmg}. Finally, \cref{sec:orientation-appendix} collects the known acyclic-orientation results needed for an alternative proof of the top-coefficient formula and derives a sink-enumeration corollary from the length-three calculation.

\section{Preliminaries}\label{sec:preliminaries}
This section collects the background needed to prove \cref{thm:e+.cmg}.

\subsection{Partitions and symmetric functions}

A \emph{partition} of a positive integer~$n$ is a finite weakly decreasing sequence~$\lambda$ of positive integers with sum $n$, denoted $\lambda\vdash n$. Its number of parts is its \emph{length}, denoted $\ell(\lambda)$. We write partitions in the usual parenthesized notation
\[
\lambda=(\lambda_1,\,\dots,\,\lambda_{\ell(\lambda)}).
\]
An exponent records multiplicity. For example, $(3,\,2^\beta)$ has one part~$3$ and $\beta$ parts equal to~$2$. The transpose (or conjugate) partition~$\lambda'$ is defined by $\lambda'_j=\lvert\{\,i\colon\lambda_i\ge j\,\}\rvert$ for every integer~$j\ge1$; in particular, $\lambda'_1=\ell(\lambda)$. For $i \ge 1$, let $m_i(\lambda)$ denote the multiplicity of the part $i$ in $\lambda$. The \emph{sign} of $\lambda$ is $\epsilon_\lambda=(-1)^{n-\ell(\lambda)}$. Define $\emptyset$ to be the unique partition of the number $0$. It has length $0$ and sign $1$. Set
\[
\lambda!
=
\prod_{i=1}^{\ell(\lambda)}\lambda_i!.
\]

We work in the ring~$\Lambda_{\mathbb{Q}}$ of symmetric functions over the rational field~$\mathbb{Q}$, abbreviated to~$\Lambda$, where~$x=(x_1,x_2,\dots)$ is an infinite sequence of independent commuting indeterminates. The symbols~$e_\lambda(x)$, $h_\lambda(x)$, $m_\lambda(x)$, $p_\lambda(x)$, and $s_\lambda(x)$ denote the \emph{elementary}, \emph{complete homogeneous}, \emph{monomial}, \emph{power-sum symmetric functions}, and \emph{Schur functions}, respectively. We omit the variable~$x$ when no confusion can arise. We identify every partition~$\lambda$ with any finite or infinite sequence obtained by appending trailing zeros to $\lambda$. The same notation applies to a finite alphabet~$z=(z_1,\dots,z_r)$: setting all variables after~$z_r$ to zero specializes each symmetric function to a symmetric polynomial in~$z_1,\dots,z_r$.

For a monomial~$M$ and a formal power series~$F$, we write~$\coeff{M}{F}$ for the coefficient of~$M$ in~$F$. For $b\in\{e,h,m,p,s\}$, a symmetric function~$F$ is \emph{$b$-positive} if all coefficients in its expansion in the $b$-basis are nonnegative. When $b=s$, we also say that~$F$ is \emph{Schur-positive}. In degree one, these five functions coincide:
\[
m_1=e_1=h_1=p_1=s_1=x_1+x_2+\dotsm.
\]
We also define $e_0=h_0=1$ and $e_k=h_k=p_k=0$ for $k<0$.

The next two propositions give the $p$-to-$e$ and $h$-to-$e$ transition formulas; see \citet[Eqs.~(2.5), (2.6), and (2.10')]{macdonald1995}. They will be used in \cref{subsec:dickson}.

\begin{proposition}[Girard--Waring formula]\label[proposition]{prop:Girard-Waring}
For every integer~$k \geq 1$,
\[
p_k=k\sum_{\lambda\vdash k}
\frac{\epsilon_\lambda(\ell(\lambda)-1)!}{\prod_i m_i(\lambda)!}\,e_\lambda.
\]
\end{proposition}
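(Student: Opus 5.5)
We derive the Girard--Waring formula from the generating functions of the $e_k$ and $p_k$. Start from
\[
E(t)=\sum_{k\ge0}e_kt^k=\prod_{i\ge1}(1+x_it),
\]
and take logarithms:
\[
\log E(t)=\sum_{i}\log(1+x_it)=\sum_{k\ge1}\frac{(-1)^{k-1}}{k}p_kt^k .
\]
It follows that
\[
p_k=(-1)^{k-1}k\,\coeff{t^k}{\log E(t)} .
\]
The task is therefore to expand $\log E(t)$ in the $e$-basis.

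Write $E(t)=1+u$ with $u=\sum_{j\ge1}e_jt^j$, and use
\[
\log(1+u)=\sum_{r\ge1}\frac{(-1)^{r-1}}{r}u^r .
\]
By the multinomial theorem, $u^r$ is a sum over compositions of $k$ with $r$ parts. Grouping these compositions by their underlying partition $\lambda\vdash k$ with $\ell(\lambda)=r$ gives
\[
\coeff{t^k}{u^r}=\sum_{\lambda\vdash k,\ \ell(\lambda)=r}\frac{r!}{\prod_i m_i(\lambda)!}\,e_\lambda ,
\]
since each $\lambda$ arises from exactly $r!/\prod_i m_i(\lambda)!$ distinct orderings.

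Combining the two steps,
\[
p_k=(-1)^{k-1}k\sum_{\lambda\vdash k}\frac{(-1)^{\ell(\lambda)-1}}{\ell(\lambda)}\cdot\frac{\ell(\lambda)!}{\prod_i m_i(\lambda)!}\,e_\lambda .
\]
The factor $\ell(\lambda)!/\ell(\lambda)$ simplifies to $(\ell(\lambda)-1)!$. The sign is
\[
(-1)^{k-1+\ell(\lambda)-1}=(-1)^{k-\ell(\lambda)}=\epsilon_\lambda .
\]
This is exactly the stated formula.

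The only point needing care is the sign bookkeeping: the expansion of $\log(1+x_it)$ and the series for $\log(1+u)$ each contribute a sign, and these must combine to give $\epsilon_\lambda$. All manipulations take place in $\Lambda[[t]]$, where $\log(1+u)$ is well defined because $u$ has zero constant term, so there are no convergence issues.
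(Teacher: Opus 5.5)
Your proof is correct. The sign bookkeeping $(-1)^{k-1}(-1)^{\ell(\lambda)-1}=\epsilon_\lambda$ and the count $\ell(\lambda)!/\prod_i m_i(\lambda)!$ of orderings are both right. The paper gives no proof of its own; it only cites Macdonald's identities $E(t)=\prod_i(1+x_it)$ and $\sum_{r\ge1}(-1)^{r-1}p_rt^{r-1}=E'(t)/E(t)$. Your expansion of $\log E(t)$ is the integrated form of that logarithmic-derivative route, so it is essentially the same argument.
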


\begin{proposition}[Nägelsbach--Kostka identity]\label[proposition]{prop:complete-to-elementary}
For every integer~$k \geq 0$,
\[
h_k
=
\sum_{\lambda\vdash k}
\frac{\epsilon_\lambda\ell(\lambda)!}{\prod_{i \geq 1}m_i(\lambda)!}\,
e_\lambda.
\]
\end{proposition}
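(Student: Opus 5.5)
The Nägelsbach--Kostka identity is classical and I would prove it with generating functions. Set
\[
E(t)=\sum_{r\ge0}e_rt^r=\prod_{i\ge1}(1+x_it)
\qquad\text{and}\qquad
H(t)=\sum_{k\ge0}h_kt^k=\prod_{i\ge1}(1-x_it)^{-1}.
\]
Then $H(t)E(-t)=1$, so
\[
H(t)=\frac{1}{1-\bigl(1-E(-t)\bigr)}
=\sum_{j\ge0}\Bigl(\sum_{r\ge1}(-1)^{r-1}e_rt^r\Bigr)^{j}.
\]
This inversion is the only structural input. The rest is coefficient extraction.

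Next I would expand the $j$-th power with the multinomial theorem and take the coefficient of~$t^k$. Each term corresponds to a composition $(r_1,\dots,r_j)$ of~$k$ and contributes $\prod_s(-1)^{r_s-1}e_{r_s}$. The sign is
\[
(-1)^{\sum_s(r_s-1)}=(-1)^{k-j}.
\]
Group the compositions according to the partition~$\lambda$ obtained by sorting their parts. Then $j=\ell(\lambda)$, the sign equals~$\epsilon_\lambda$, and the product is~$e_\lambda$. The number of compositions that rearrange to~$\lambda$ is the multinomial coefficient $\ell(\lambda)!/\prod_{i\ge1}m_i(\lambda)!$. Summing over $\lambda\vdash k$ gives the stated formula.

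For $k=0$, only $j=0$ contributes. This gives $h_0=1$, which matches the empty partition, since it has sign~$1$ and length~$0$.

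There is no real obstacle here. The only step needing care is the bookkeeping that identifies the number of distinct orderings of the parts of~$\lambda$ with $\ell(\lambda)!/\prod_i m_i(\lambda)!$. One also has to confirm that the formal inversion of $E(-t)$ is legitimate. It is, because $1-E(-t)$ has zero constant term, so the geometric series converges in $\Lambda[[t]]$.
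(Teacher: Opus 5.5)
Your argument is correct and follows the route the paper implicitly relies on. The paper gives no proof of its own and only cites Macdonald's generating-function relation $H(t)E(-t)=1$ (Eqs.~(2.5)--(2.6) there). The expansion then follows by your geometric-series inversion, with compositions grouped by their underlying partition, and your sign count, multinomial count and $k=0$ case are all handled correctly.
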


We also require the $e$-$m$ form of the Cauchy identity, see \citet[Eq.~(4.2')]{macdonald1995}. It will be used in the proof of \cref{thm:general-coefficient}.

\begin{proposition}[Cauchy's identity]\label[proposition]{prop:cauchy}
Let $x=(x_1,x_2,\dots)$ and $y=(y_1,y_2,\dots)$ be sequences of commuting indeterminates, respectively. Then
\[
\prod_{i \geq 1}\prod_{j \geq 1}(1+x_iy_j)
=
\sum_{\lambda}e_\lambda(x)m_\lambda(y), 
\]
where the sum runs over all partitions $\lambda$ of positive integers and the unique partition of $0$.
\end{proposition}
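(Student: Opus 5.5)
The plan is to expand the product one $y$-variable at a time and then group terms by the multiset of exponents; this is the standard generating-function argument. Throughout, I would work in the ring of formal power series in $x$ and $y$. There, each coefficient of a fixed $y$-monomial involves only finitely many factors of the product in a nontrivial way, so all the manipulations below are legitimate.

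First, fix $j\ge1$. The inner product over $i$ is the generating function of the elementary symmetric functions in $x$:
\[
\prod_{i\ge1}(1+x_iy_j)=\sum_{k\ge0}e_k(x)\,y_j^{k},
\]
with $e_0=1$. This follows by choosing, from each factor, either $1$ or $x_iy_j$: the terms with $y_j^k$ correspond exactly to $k$-element subsets of indices, and the sum of their $x$-monomials is $e_k(x)$.

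Second, take the product over $j$. A term of the expansion is determined by a sequence $\alpha=(\alpha_1,\alpha_2,\dots)$ of nonnegative integers with only finitely many nonzero entries, so that
\[
\prod_{i,j}(1+x_iy_j)=\sum_{\alpha}\Bigl(\prod_{j}e_{\alpha_j}(x)\Bigr)y^{\alpha}.
\]
Because the $e_k$ commute and $e_0=1$, the product $\prod_j e_{\alpha_j}(x)$ equals $e_\lambda(x)$, where $\lambda$ is the partition obtained by sorting the nonzero entries of $\alpha$ in weakly decreasing order. This $\lambda$ may be $\emptyset$, in which case the product is $e_\emptyset=1$.

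Third, group the sequences $\alpha$ according to $\lambda$. The set of sequences $\alpha$ that sort to a given $\lambda$ is exactly the set of distinct rearrangements of $\lambda$ (with zeros appended). Hence $\sum_{\alpha\mapsto\lambda}y^\alpha=m_\lambda(y)$ by the definition of the monomial symmetric function. This yields $\sum_\lambda e_\lambda(x)m_\lambda(y)$, where $\lambda$ runs over all partitions including $\emptyset$.

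There is no genuine obstacle here. The only point requiring care is the bookkeeping in the regrouping step: each sequence $\alpha$ must be counted exactly once, which holds because $m_\lambda$ sums over distinct rearrangements and not over permutations. One should also confirm that the infinite product is well defined as a formal power series. This holds because, for a fixed monomial $y^\alpha$, only the finitely many factors with $\alpha_j>0$ contribute a term other than $1$.
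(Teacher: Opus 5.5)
Your proof is correct and is the standard argument. The paper does not prove this identity itself but cites Macdonald's Eq.~(4.2'), whose proof is exactly your chain $\prod_{i,j}(1+x_iy_j)=\prod_j\sum_{k\ge0}e_k(x)y_j^k=\sum_{\alpha}\bigl(\prod_j e_{\alpha_j}(x)\bigr)y^\alpha=\sum_\lambda e_\lambda(x)m_\lambda(y)$, taken over finitely supported sequences $\alpha$; your remarks on well-definedness and on grouping the distinct rearrangements of $\lambda$ fill in the details that the citation leaves implicit.
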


The Newton--Girard identity below follows from \citet[Eq.~(2.11')]{macdonald1995}. We use it in the proof of \cref{lem:two-part-endpoint-coefficient} to express $m_{(2\beta+2,\,1)}$ in terms of power sums.

\begin{proposition}[Newton--Girard identity]
\label[proposition]{prop:newton-girard} For every integer $m\geq1$,
\[
p_m
=
\sum_{j=1}^{m-1}(-1)^{j-1}e_jp_{m-j}
+(-1)^{m-1}m e_m.
\]
\end{proposition}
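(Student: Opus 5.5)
We will prove the identity by logarithmic differentiation of the generating function of the elementary symmetric functions, working in $\Lambda[[t]]$. Set
\[
E(t)=\sum_{k\ge0}e_kt^k=\prod_{i\ge1}(1+x_it),
\]
where the product formula is the standard one (it is also the specialization of \cref{prop:cauchy} with $y=(t,0,0,\dots)$, since $m_\lambda(t,0,\dots)$ vanishes unless $\ell(\lambda)\le1$).

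Taking the logarithmic derivative factor by factor and expanding each geometric series gives
\[
\frac{E'(t)}{E(t)}=\sum_{i\ge1}\frac{x_i}{1+x_it}
=\sum_{k\ge1}(-1)^{k-1}p_k\,t^{k-1}.
\]
Clearing the denominator yields
\[
E'(t)=E(t)\sum_{k\ge1}(-1)^{k-1}p_kt^{k-1}.
\]

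Next we compare the coefficients of $t^{m-1}$ on both sides. On the left the coefficient is $m e_m$. On the right it is
\[
\sum_{j=0}^{m-1}(-1)^{m-j-1}e_jp_{m-j},
\]
where $j$ indexes the factor $e_jt^j$ and $k=m-j$ indexes the power-sum term.

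Multiplying this equality by $(-1)^{m-1}$ and using $(-1)^{m-1}(-1)^{m-j-1}=(-1)^j$, we obtain
\[
(-1)^{m-1}me_m=p_m+\sum_{j=1}^{m-1}(-1)^je_jp_{m-j},
\]
where the $j=0$ term, with $e_0=1$, has been separated out. Solving for $p_m$ gives exactly the stated identity.

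No step is a real obstacle; the argument is routine. The only points needing care are the following.
\begin{itemize}
\item The formal manipulations, namely the logarithmic derivative of an infinite product, are legitimate. This holds because the product has constant term~$1$ and every coefficient of $t^k$ is a well-defined symmetric function of degree~$k$. Equivalently, one may truncate to finitely many variables $x_1,\dots,x_N$ with $N\ge m$, prove the identity there, and let $N\to\infty$.
\item The sign bookkeeping in the final rearrangement must be done correctly.
\end{itemize}
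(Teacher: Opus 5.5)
Your proof is correct. The paper gives no argument of its own here and simply cites Macdonald's Eq.~(2.11'); Macdonald derives that identity exactly as you do, from the logarithmic derivative $E'(t)/E(t)=\sum_{r\ge1}(-1)^{r-1}p_rt^{r-1}$ (his Eq.~(2.10')), which yields $me_m=\sum_{r=1}^{m}(-1)^{r-1}p_re_{m-r}$, and your sign rearrangement then gives the stated form.
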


Vieta's formulas provide the link between the roots and their elementary symmetric functions; see \citet[Eq.~(2.2)]{macdonald1995}. We use them in the proofs of \cref{thm:general-coefficient,lem:power-sums} to identify the elementary symmetric functions of the relevant root sequences.

\begin{proposition}[Vieta's formulas]
\label[proposition]{prop:vieta} Let $R$ be a commutative ring, let $f(z)\in R[z]$ be monic of degree~$n$, and let $y=(y_1,\ldots,y_n)$ be a sequence of roots of~$f(z)$ in a splitting algebra over~$R$. Then
\[
f(z)
=
\prod_{a=1}^n(z-y_a)
=
z^n-e_1(y)z^{n-1}+e_2(y)z^{n-2}
{}+\dots+(-1)^ne_n(y).
\]
\end{proposition}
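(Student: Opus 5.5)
The plan is to prove the two equalities separately: first the factorization $f(z)=\prod_{a=1}^n(z-y_a)$, then the identification of the coefficients of that product with signed elementary symmetric functions of~$y$.

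\textbf{Step 1: the factorization.} I read the hypothesis on~$y$ in the standard sense for splitting algebras. The splitting algebra is built by adjoining roots one at a time, so it suffices to show inductively that after adjoining $y_1,\dots,y_k$ we have $f(z)=\prod_{a=1}^k(z-y_a)\,g_k(z)$ with $g_k$ monic of degree $n-k$.
\begin{itemize}
\item Base case: $k=0$ with $g_0=f$.
\item Inductive step: $y_{k+1}$ is adjoined as a root of~$g_k$. Since $g_k$ is monic, division by the monic linear polynomial $z-y_{k+1}$ works over any commutative ring. The remainder is the constant $g_k(y_{k+1})=0$, so $g_k(z)=(z-y_{k+1})g_{k+1}(z)$ with $g_{k+1}$ monic of degree $n-k-1$.
\item Conclusion: at $k=n$, $g_n$ is monic of degree~$0$, so $g_n=1$.
\end{itemize}
Because $R$ need not be a domain, I cannot argue via ``distinct roots''. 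The factorization has to come from this successive division, and that is the only point requiring care.

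\textbf{Step 2: the coefficient identification.} Expand $\prod_{a=1}^n(z-y_a)$ by choosing, in each factor, either~$z$ or~$-y_a$. The choices in which $-y_a$ is taken exactly for $a\in S$ contribute $(-1)^{\lvert S\rvert}\prod_{a\in S}y_a\,z^{n-\lvert S\rvert}$. Grouping the terms by $k=\lvert S\rvert$, the coefficient of $z^{n-k}$ is
\[
(-1)^k\sum_{1\le a_1<\dots<a_k\le n}y_{a_1}\cdots y_{a_k}=(-1)^ke_k(y).
\]
The last equality is the definition of $e_k$, specialized to the finite alphabet~$y$ as in the conventions above (variables beyond $y_n$ set to zero). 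This expansion is a formal identity valid in any commutative ring, which gives the displayed formula.

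I expect no step to be a genuine obstacle. The only subtlety is Step~1 over a general ring~$R$, which the monic division argument handles.
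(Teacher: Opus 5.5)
Your argument is correct and is the standard one. The paper gives no proof of its own and only cites Macdonald's Eq.~(2.2), $\prod_i(1+x_it)=\sum_r e_rt^r$; your Step~2 is that identity for the finite alphabet $y$, homogenized by putting $t=-z^{-1}$ and multiplying by $z^n$. Your Step~1 spells out the factorization that the paper builds into the phrase ``splitting algebra.'' Your reading of ``a sequence of roots'' as the roots produced by successive adjunction, counted with multiplicity, is the one the statement needs: for arbitrary roots it fails, e.g.\ $y=(1,1)$ for $f(z)=z^2-1$.
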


\subsection{Chromatic symmetric functions}

For a finite simple graph~$G=(V,E)$ and a set~$S$, a \emph{proper coloring} with color set~$S$ is a map $\kappa\colon V\to S$ such that adjacent vertices receive distinct colors. For a positive integer~$q$, the number of proper colorings with color set~$[q]=\{1,\,\dots,\,q\}$ is a polynomial in~$q$. This polynomial, introduced by \citet{birkhoff1912} in connection with the four-color problem, is the \emph{chromatic polynomial}~$\chi_G(q)$. As a generalization of $\chi_G(q)$, \citet{stanley1995} introduced the \emph{chromatic symmetric function} of~$G$ to be the symmetric function
\[
X_G(x)=\sum_{\kappa}\prod_{v \in V}x_{\kappa(v)},
\]
where the sum is over all proper colorings with color set~$\mathbb Z_{>0}$. For $b\in\{e,h,m,p,s\}$, a graph~$G$ is \emph{$b$-positive} if its chromatic symmetric function~$X_G$ is $b$-positive.

An \emph{independent set} in a graph~$G$ is a set of pairwise nonadjacent vertices, and a \emph{clique} is a set of pairwise adjacent vertices. The \emph{independence number}~$\alpha(G)$ and the \emph{clique number}~$\omega(G)$ are their respective maximum cardinalities. More generally, for every integer~$k\ge1$, let~$\alpha_k(G)$ be the maximum total cardinality of $k$ pairwise disjoint independent sets, and let~$\omega_k(G)$ be the maximum total cardinality of $k$ pairwise disjoint cliques. 
To narrow the range of partitions that can index nonzero $e$-coefficients, we will need the following Alpha--Omega Lemma of \citet[Lemma~2.10]{sagan-tom2026}.

\begin{lemma}[Alpha--Omega Lemma]\label[lemma]{lem:sagan-tom-alpha-omega}
Let $G$ be a graph on $n$ vertices, let $\mu\vdash n$, and suppose that $\coeff{e_\mu}{X_G}\ne0$. For every integer~$k\ge1$,
\[
\alpha_k(G)
\ge
\mu'_1+\dots+\mu'_k
\qquad\text{and}\qquad
\omega_k(G)
\le
\mu_1+\dots+\mu_k, 
\]
where $\mu_1'\mu_2'\dotsm$ is the transpose of the partition $\mu=\mu_1\mu_2\dotsm$.
\end{lemma}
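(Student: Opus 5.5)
The plan is to derive both inequalities from the augmented-monomial expansion of~$X_G$ together with the unitriangularity of the change of basis between $m$ and $e$ with respect to dominance order~$\trianglelefteq$. Grouping proper colorings by the partition of~$V$ into nonempty color classes gives
\[
X_G=\sum_{\pi}\tilde m_{\operatorname{type}(\pi)},\qquad \tilde m_\lambda=\Bigl(\prod_{i\ge1}m_i(\lambda)!\Bigr)m_\lambda,
\]
where $\pi$ runs over the partitions of~$V$ into independent sets. Hence $\coeff{m_\lambda}{X_G}\neq0$ only if $\lambda=\operatorname{type}(\pi)$ for some stable partition~$\pi$ of~$V$.

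\textbf{Step 1: triangularity.} It is classical that $e_\nu=m_{\nu'}+\sum_{\lambda\triangleleft\nu'}c_{\nu\lambda}m_\lambda$. This follows from the fact that the number of $0$-$1$ matrices with row sums~$\nu$ and column sums~$\lambda$ vanishes unless $\lambda\trianglelefteq\nu'$, and equals~$1$ when $\lambda=\nu'$. Inverting this unitriangular system, I expect to obtain
\[
m_\lambda=e_{\lambda'}+\sum_{\nu'\triangleleft\lambda}d_{\lambda\nu}e_\nu .
\]
So $\coeff{e_\mu}{m_\lambda}\neq0$ forces $\mu'\trianglelefteq\lambda$, which is equivalent to $\lambda'\trianglelefteq\mu$ because conjugation reverses dominance. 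Combining this with the expansion above, $\coeff{e_\mu}{X_G}\neq0$ yields a stable partition~$\pi$ of type~$\lambda$ with both $\mu'\trianglelefteq\lambda$ and $\lambda'\trianglelefteq\mu$.

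\textbf{Step 2: the $\alpha$ inequality.} The $k$ largest blocks of~$\pi$ are $k$ pairwise disjoint independent sets, so $\lambda_1+\dots+\lambda_k\le\alpha_k(G)$. Since $\mu'\trianglelefteq\lambda$, it follows that $\mu'_1+\dots+\mu'_k\le\lambda_1+\dots+\lambda_k\le\alpha_k(G)$.

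\textbf{Step 3: the $\omega$ inequality.} Take $k$ pairwise disjoint cliques $C_1,\dots,C_k$ of total size~$\omega_k(G)$. Each block~$B$ of~$\pi$ is independent, so it meets each $C_j$ in at most one vertex. Hence $B$ contains at most $\min(|B|,k)$ vertices of $C_1\cup\dots\cup C_k$. Summing over the blocks gives
\[
\omega_k(G)\le\sum_i\min(\lambda_i,k)=\lambda'_1+\dots+\lambda'_k\le\mu_1+\dots+\mu_k,
\]
where the last inequality uses $\lambda'\trianglelefteq\mu$.

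The only point needing real care is Step~1. There I must verify the direction of triangularity, including the case of partitions of equal length where dominance is partial, and confirm that inversion preserves the support condition. This is the main obstacle, though it is standard: the inverse of a unitriangular matrix with respect to a partial order remains unitriangular with respect to that same order.
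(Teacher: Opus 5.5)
The paper gives no proof of this lemma. It quotes it from \citet[Lemma~2.10]{sagan-tom2026} and uses it as a black box in \cref{lem:support-restriction}, so there is no in-paper argument to compare yours against.

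Your proof is correct and self-contained. You combine Stanley's expansion $X_G=\sum_\pi \tilde m_{\mathrm{type}(\pi)}$ over stable partitions with the inverted Gale--Ryser unitriangularity, namely $[e_\mu]m_\lambda\neq0\Rightarrow\mu'\trianglelefteq\lambda$. This gives a sharper intermediate statement: $[e_\mu]X_G\neq0$ forces $\lambda'\trianglelefteq\mu$, equivalently $\mu'\trianglelefteq\lambda$, for the type $\lambda$ of a single stable partition of $G$. Both inequalities then follow by elementary counting.

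The point you flag in Step~1 resolves exactly as you say. Matrices supported on a finite poset with unit diagonal form a group: write $A=I+N$ with $N$ nilpotent, and $A^{-1}=\sum_j(-N)^j$ stays in the same support pattern.

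Two small points are worth stating explicitly.
\begin{enumerate}
\item If $\pi$ has fewer than $k$ blocks, pad with empty independent sets, or split a block, since subsets of independent sets are independent. Then $\lambda_1+\dots+\lambda_k\le\alpha_k(G)$ still holds.
\item The count $\sum_i\min(\lambda_i,k)=\lambda'_1+\dots+\lambda'_k$ in Step~3 is the same count that proves the necessity half of Gale--Ryser. So the whole argument uses nothing beyond this one estimate and the incidence-algebra inversion.
\end{enumerate}

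Compared with citing the lemma, your route makes the paper self-contained. It also shows that one stable-partition type witnesses both the $\alpha$-bound and the $\omega$-bound simultaneously.
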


\subsection{Dickson polynomials}
\label{subsec:dickson}

We begin with symmetric polynomials in two independent indeterminates $u$ and $v$. Let the symbols~$e_1$ and~$e_2$ be independent formal variables defined by
\[
e_1=u+v
\quad\text{and}\quad e_2=uv.
\]
Define
\begin{align}
\creflabel[def]{def:Dickson1.uv}
D_k(u+v,\,uv)
&= p_k(u,v)=u^k+v^k
\quad\text{for $k \geq 1$},\quad\text{and}\\
\creflabel[def]{def:Dickson2.uv}
\mathcal E_k(u+v,\,uv)
&= h_k(u,v)=\sum_{j=0}^{k}u^{k-j}v^j
\quad\text{for $k \geq 0$}.
\end{align}
We need to keep in mind that \cref{def:Dickson1.uv} holds only for $k\ge 1$; the case $k=0$ is defined by
\[
D_0(u+v,\,uv)=2.
\]
As usual, we use the convention~$\mathcal E_0(u+v,\,uv)=1$.

The polynomials $D_k$ now called \emph{Dickson polynomials of the first kind} go back to \citet{dickson1897}, while the variants $\mathcal E_k$ now called \emph{Dickson polynomials of the second kind} were introduced by \citet{schur1923}; we adopt the symbol $\mathcal E_k$ since the conventional symbol $E_k$, see \citet[Definition~9.6.5]{mullen-panario2013} for instance, will later denote formal variables.

Specializing \cref{prop:Girard-Waring,prop:complete-to-elementary} to the two-variable sequence $(u,v)$, only partitions of the form $(2^q,\,1^{k-2q})$ remain. Hence, we have $D_0=2$ by definition, and
\begin{align}
\creflabel[def]{def:Dickson1.sum}
D_k(e_1,e_2)
&=
\sum_{q=0}^{\lfloor k/2\rfloor}
\frac{k}{k-q}
\binom{k-q}{q}
(-e_2)^q e_1^{k-2q}, 
\quad\text{for $k\ge 1$,} \\
\label{def:Dickson2.sum}
\mathcal E_k(e_1,e_2)
&=
\sum_{q=0}^{\lfloor k/2\rfloor}
\binom{k-q}{q}
(-e_2)^q e_1^{k-2q}, 
\quad\text{for $k\ge 0$.}
\end{align}
Suppressing the arguments~$(e_1,e_2)$, the first few values are
\[
\begin{aligned}
D_0&= 2,
& D_1&= e_1,
& D_2&= e_1^2-2e_2,
& D_3&= e_1^3-3e_1e_2,\quad\text{and}\\
\mathcal E_0&= 1,
& \mathcal E_1&= e_1,
& \mathcal E_2&= e_1^2-e_2,
& \mathcal E_3&= e_1^3-2e_1e_2.
\end{aligned}
\]
We will use the convention
\[
D_j(e_1, e_2)=\mathcal E_j(e_1,e_2)=0 \qquad \text{for $j<0$}.
\]

For the specialization $e_2=1$, the Dickson and Chebyshev families are related by
\[
D_k(2x,1)=2T_k(x)
\quad\text{and}\quad
\mathcal E_k(2x,1)=U_k(x),
\]
where the polynomials~$T_k$ and~$U_k$ are the \emph{Chebyshev polynomials of the first and second kinds}, respectively; see \citet[Remark~9.6.7]{mullen-panario2013}. For further background on Dickson polynomials, see \citet{lidl-mullen-turnwald1993}.

We shall use the following elementary binomial identities in several later calculations. We adopt the convention that
\[
\binom{n}{q}=0\quad\text{whenever $q<0$ or $q>n$}. 
\]
With this convention, for all integers~$n\ge0$ and all integers~$q$, \emph{Pascal's identity} gives
\begin{equation}\label{eq:pascal}
\binom{n+1}{q}=\binom{n}{q}+\binom{n}{q-1}.
\end{equation}
For all integers~$n\ge1$ and~$q$, we also have the following identities:
\begin{align}
q\binom{n}{q}
&=n\binom{n-1}{q-1},
\label{eq:binomial-left}\\
(n-q)\binom{n}{q}
&=n\binom{n-1}{q},
\label{eq:binomial-right}\\
(q+1)\binom{n}{q+1}
&=(n-q)\binom{n}{q}.
\label{eq:binomial-successor}
\end{align}

\subsection{The restricted-injection numbers \texorpdfstring{$a_d$}{a d}}
\label{sec:prelim-ad} This subsection reviews algebraic and combinatorial properties of the restricted-injection numbers~$a_d$ defined in \cref{def:a}. For every integer~$d \geq 1$, define
\[
\Delta_d=a_d-a_{d-1}.
\]

\begin{proposition}\label[proposition]{prop:ad}
We have
\begin{align}
\label{a.sum}
a_d&=\frac{1}{d!}\sum_{q=0}^{d}(-1)^q\binom{d}{q}(2d-q)!
\quad\text{for $d\ge 0$, and}\\
\label{a.rec}
a_d&= 2(2d-1)a_{d-1}+a_{d-2}
\quad\text{for $d \geq 2$}, \\
\label{lem:delta}
\Delta_d
&=
\frac{2d}{d!}
\sum_{q=0}^{d}
(-1)^q
\binom{d}{q}
(2d-q-1)!
\quad\text{for $d \geq 1$}.
\end{align}
\end{proposition}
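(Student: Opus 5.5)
We prove the three statements of \cref{prop:ad} in order: the sum \cref{a.sum} by inclusion--exclusion, and then \cref{a.rec} and \cref{lem:delta} as algebraic consequences of \cref{a.sum}.

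\textbf{The sum \cref{a.sum}.} For $Q\subseteq[d]$, let $A_Q$ be the set of injections $f\colon[d]\hookrightarrow[2d]$ with $f(i)=i$ for all $i\in Q$. If $\lvert Q\rvert=q$, then the remaining $d-q$ points of $[d]$ must map injectively into the remaining $2d-q$ targets, so
\[
\lvert A_Q\rvert=\frac{(2d-q)!}{d!}.
\]
Inclusion--exclusion over the forbidden fixed points then gives
\[
a_d=\sum_{q=0}^d(-1)^q\binom dq\frac{(2d-q)!}{d!},
\]
which is \cref{a.sum}. The case $d=0$ gives $a_0=1$, as required.

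\textbf{The difference formula \cref{lem:delta}.} Write $d!\,a_d=\sum_q(-1)^q\binom dq(2d-q)!$ and $(d-1)!\,a_{d-1}=\sum_q(-1)^q\binom{d-1}q(2d-2-q)!$. In the second sum, shift the index $q\mapsto q-1$. The exponents $2d-2-(q-1)=2d-q-1$ then line up with those in the target formula, and Pascal's identity \cref{eq:pascal} lets us regroup the binomial coefficients.

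One route is to split $\binom dq=\binom{d-1}q+\binom{d-1}{q-1}$ and to use $(2d-q)!=(2d-q)(2d-q-1)!$. Everything is then expressed against the common factor $(2d-q-1)!$, and the factor $2d$ should emerge from combining $(2d-q)$ with the $q$ produced by \cref{eq:binomial-left}.

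A cleaner alternative is a combinatorial decomposition. Classify the injections counted by $a_d$ by whether $2d$, respectively $2d-1$, lies in the image, and relate each class to configurations on $[2d-1]$. This gives a recursion whose solution is \cref{lem:delta}.

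I would first simply verify the identity algebraically: multiply both sides by $d!$ and compare the coefficients of each $(2d-q-1)!$ after the index shift. Checking against $\Delta_1=0$, $\Delta_2=6$ and $\Delta_3=64$ catches slips.

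\textbf{The recurrence \cref{a.rec}.} I would prove this from the closed form. Set $b_d=d!\,a_d$. Then \cref{a.rec} is equivalent to
\[
b_d=2(2d-1)\,d\,b_{d-1}+d(d-1)\,b_{d-2}.
\]
The standard approach is to find a holonomic recurrence via a Zeilberger-style certificate for the summand $t(d,q)=(-1)^q\binom dq(2d-q)!$. Concretely, I would look for a rational function $R(d,q)$ such that
\[
t(d,q)-2d(2d-1)\,t(d-1,q)-d(d-1)\,t(d-2,q)=G(d,q+1)-G(d,q),
\qquad G=R\,t,
\]
and then sum over $q$ so that the right-hand side telescopes.

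Alternatively, the recurrence can be obtained from \cref{lem:delta} combined with \cref{a.sum}: express $\Delta_d$ and $a_{d-1}$ through the same sums and eliminate. Since $a_d$ is OEIS A002119, known to satisfy this recurrence, one could also cite it. Still, a self-contained telescoping argument is preferable.

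\textbf{Main obstacle.} The hard step is producing the telescoping certificate for \cref{a.rec} and verifying it cleanly. I expect $G$ to have the shape $\text{polynomial}(d,q)\cdot(-1)^q\binom{d-1}{q-1}(2d-q-1)!$. I would determine the polynomial by matching low-degree terms in $q$, and then confirm the recurrence with $a_2=7$, $a_3=71$ and $a_4=1001$.
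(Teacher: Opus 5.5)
The paper gives no proof of this proposition. It cites Chao--DesJarlais--Leonard for \cref{a.sum} and Efimov for \cref{a.rec,lem:delta}, so your self-contained route differs from the paper's. Your first part is complete: the inclusion--exclusion count gives \cref{a.sum} directly.

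For \cref{lem:delta}, your second hint is the right mechanism, and the Pascal split is an unnecessary detour. Write $(2d-q)!=2d\,(2d-q-1)!-q\,(2d-q-1)!$ and apply \cref{eq:binomial-left}:
\[
d!\,a_d=2d\sum_{q=0}^{d}(-1)^q\binom dq(2d-q-1)!-d\sum_{q=1}^{d}(-1)^q\binom{d-1}{q-1}(2d-q-1)!.
\]
The shift $q\mapsto q+1$ shows that the subtracted expression equals $d\cdot\bigl(-(d-1)!\,a_{d-1}\bigr)=-d!\,a_{d-1}$. Hence $d!\,(a_d-a_{d-1})$ equals the first term, which is \cref{lem:delta}. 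You should write this out rather than say the factor $2d$ ``should emerge.''

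The genuine gap is \cref{a.rec}. You name three routes but execute none of them:
\begin{itemize}
\item the Zeilberger certificate is never produced;
\item the elimination via \cref{lem:delta} is not carried out;
\item citing the literature, as the paper does with Efimov, is legitimate but is not the self-contained argument you set out to give.
\end{itemize}
Your guessed shape for the certificate also cannot work. Your proposed $G$ vanishes at $q=0$, so the relation at $q=0$ forces $G(d,1)=-d(d-1)(2d-4)!$. The polynomial factor would then have to equal $d/(2(2d-3))$ at $q=1$, which is not a polynomial in $d$.

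The missing observation is that no telescoping is needed. Write $a_d=\sum_{q=0}^{d}(-1)^q\frac{(2d-q)!}{q!\,(d-q)!}$ and use the convention $1/m!=0$ for $m<0$. Then for $d\ge2$ and $0\le q\le d$,
\[
\frac{(2d-q)!}{q!\,(d-q)!}=2(2d-1)\,\frac{(2d-2-q)!}{q!\,(d-1-q)!}+\frac{(2d-2-q)!}{(q-2)!\,(d-q)!}.
\]
To verify this, multiply by $q!\,(d-q)!/(2d-2-q)!$, which is finite and nonzero in this range. The identity becomes $(2d-q)(2d-q-1)=2(2d-1)(d-q)+q(q-1)$, and both sides equal $4d^2-4dq+q^2-2d+q$. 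Now multiply by $(-1)^q=(-1)^{q-2}$ and sum over $q$. The left side gives $a_d$. The right side gives $2(2d-1)a_{d-1}+a_{d-2}$ once the last sum is reindexed by $q-2$. In your notation, this is the certificate $G(d,q)=-d(d-1)\bigl(t(d-2,q-2)+t(d-2,q-1)\bigr)$.
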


All identities in \cref{prop:ad} are known: \cref{a.sum} is equivalent to \citet[Eq.~(1)]{chao-desjarlais-leonard2005}, while \cref{a.rec,lem:delta} correspond to \citet[Eqs.~(16) and (17)]{efimov2021}, respectively. \cref{a.sum} will be used in the proofs of \cref{lem:coeff.2b+3,lem:two-part-endpoint-coefficient,lem:length-two-coefficients}; \cref{a.rec} will yield \cref{lem:delta-recurrence} and will also be used in \cref{lem:two-part-endpoint-coefficient}; and \cref{lem:delta} will be used in \cref{lem:two-part-endpoint-coefficient,lem:length-three-coefficients}.

We record a relation for~$\Delta_d$, which makes the positivity of the difference explicit.

\begin{lemma}\label[lemma]{lem:delta-recurrence}
We have $\Delta_1=0$, and for every integer~$d\ge1$,
\[
\Delta_{d+1}=(4d+1)a_d+a_{d-1}>0.
\]
\end{lemma}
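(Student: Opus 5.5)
We prove \cref{lem:delta-recurrence} directly from the recurrence \cref{a.rec} together with the initial values $a_0=a_1=1$, which follow from \cref{def:a}: the empty map is the only injection $[0]\hookrightarrow[0]$, and the only injection $[1]\hookrightarrow[2]$ avoiding the fixed point sends $1\mapsto 2$. Hence $\Delta_1=a_1-a_0=0$.

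For $d\ge1$, apply \cref{a.rec} with index $d+1\ge2$:
\[
a_{d+1}=2(2d+1)a_d+a_{d-1}=(4d+2)a_d+a_{d-1}.
\]
Subtracting $a_d$ gives
\[
\Delta_{d+1}=a_{d+1}-a_d=(4d+1)a_d+a_{d-1},
\]
which is the claimed identity.

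For the strict inequality, each $a_d\ge0$ because it is a cardinality. Moreover $a_d\ge1$ for every $d\ge0$: the case $d=0$ is the empty injection, and for $d\ge1$ the map $f(i)=i+d$ is an injection $[d]\hookrightarrow[2d]$ with no fixed points. Therefore $(4d+1)a_d\ge 5>0$, and adding $a_{d-1}\ge0$ keeps $\Delta_{d+1}>0$.

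The argument is a one-line consequence of \cref{a.rec}. The only point needing care is the index range: \cref{a.rec} is stated for $d\ge2$, so it must be applied at $d+1$, which requires exactly $d\ge1$. There is no substantial obstacle.
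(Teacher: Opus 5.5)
Your proposal is correct and follows essentially the same route as the paper: you read off $\Delta_1=0$ from $a_0=a_1=1$, apply \cref{a.rec} at index $d+1$ and subtract $a_d$, and get positivity from $a_d\ge1$. Your explicit injection $f(i)=i+d$ is a welcome justification of positivity, which the paper only asserts.
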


\begin{proof}
The identity $\Delta_1=0$ follows from the fact $a_0=a_1=1$. \cref{a.rec} implies the formula for $\Delta_{d+1}$. It is positive since the numbers $a_k$ count injections and are positive.
\end{proof}

\section{Generating functions and coefficient extraction}
\label{sec:coefficient-extraction}

In this section, we first reduce the possible coefficient indices. Then we transform the remaining problem into a multivariate coefficient-extraction problem. Finally we reduce the surviving extractions to calculations with Dickson polynomials.

From this point onward, let $\beta\ge1$ and set $\lambda=(3,\,2^\beta)$ and
\[
K_\lambda=K_{(3,\,2^\beta)}.
\]
We first use the Alpha--Omega Lemma to determine which coefficients can be nonzero.

\begin{lemma}\label[lemma]{lem:support-restriction}
Let $\lambda=(3,\,2^\beta)$ and $\mu\vdash2\beta+3$. If $\coeff{e_\mu}{X_{K_\lambda}}\ne0$, then $\ell(\mu)\le3$, where the equality holds only if $\mu_3=1$.
\end{lemma}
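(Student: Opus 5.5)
We plan to apply \cref{lem:sagan-tom-alpha-omega} to $G=K_\lambda$ with $\lambda=(3,\,2^\beta)$, using only the $\alpha$-inequality. The first step is to compute $\alpha_k(G)$. The independent sets of a complete multipartite graph are exactly the subsets of single partite sets, since any two vertices in different parts are adjacent. So a family of $k$ pairwise disjoint independent sets covers at most the $k$ largest parts, and taking those parts whole is optimal. This gives
\[
\alpha_1(G)=3,\qquad \alpha_k(G)=3+2(k-1)=2k+1\quad\text{for }1\le k\le\beta+1,
\]
and $\alpha_k(G)=2\beta+3$ for $k\ge\beta+1$.

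Next, suppose $\coeff{e_\mu}{X_{K_\lambda}}\ne0$. \Cref{lem:sagan-tom-alpha-omega} with $k=1$ gives $\mu'_1=\ell(\mu)\le\alpha_1(G)=3$, which is the first assertion.

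For the equality case, assume $\ell(\mu)=3$, so $\mu'_1=3$. Apply the lemma with $k=2$: we get $\mu'_1+\mu'_2\le\alpha_2(G)=5$, hence $\mu'_2\le2$. Now $\mu'_2$ counts the parts of $\mu$ that are at least $2$. Since $\mu$ has exactly three parts and at most two of them are at least $2$, the smallest part satisfies $\mu_3=1$.

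The only step needing care is the computation of $\alpha_k$, specifically the claim that an independent set lies inside one partite set. This is immediate from the definition of $K_\lambda$. The edge case $\beta\ge1$ guarantees that there is a second part of size $2$, so that $\alpha_2=5$. Nothing here should be a real obstacle; the $\omega$-inequality is not needed.
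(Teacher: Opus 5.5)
Your proof is correct. The first assertion is proved exactly as in the paper, but the equality case follows a slightly different route.

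\textbf{Paper's route.} For the equality case the paper uses the $\omega$-half of the Alpha--Omega Lemma with $k=2$. Two disjoint cliques contain at most two vertices from each of the $\beta+1$ partite sets, so
\[
\omega_2(K_\lambda)=2\beta+2\le\mu_1+\mu_2=2\beta+3-\mu_3,
\]
which forces $\mu_3\le1$.

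\textbf{Your route.} You stay on the $\alpha$-side, also at $k=2$. The bound $\mu'_1+\mu'_2\le\alpha_2=5$ caps the number of parts that are at least $2$. Your computation $\alpha_k(K_\lambda)=\lambda_1+\dots+\lambda_k$ needs only the observation that independent sets lie inside single parts.

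\textbf{Comparison.} For complete multipartite graphs the two halves carry the same information. Here $\alpha_k(K_\lambda)=\lambda_1+\dots+\lambda_k$ and $\omega_k(K_\lambda)=\lambda'_1+\dots+\lambda'_k$, and conjugation reverses dominance. So neither route is stronger, and yours is at least as clean. The one advantage of the paper's inequality $\mu_1+\mu_2\ge 2\beta+2$ is that it alone already forces $\mu_3+\mu_4+\dots\le1$, hence $\ell(\mu)\le3$. This makes the paper's $k=1$ step redundant. Your $\alpha$-route genuinely needs both $k=1$ and $k=2$.
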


\begin{proof}
Suppose that $\coeff{e_\mu}{X_{K_\lambda}}\ne0$.
\Cref{lem:sagan-tom-alpha-omega} with $k=1$ gives
\[
\ell(\mu)
=\mu'_1
\le\alpha_1(K_\lambda)
=\alpha(K_\lambda)
=3.
\]
On the other hand, any two pairwise disjoint cliques contain at most two vertices from each of the $\beta+1$ partite sets of~$K_\lambda$, and this bound is attained by choosing one vertex from each partite set for each clique. Thus \cref{lem:sagan-tom-alpha-omega} with $k=2$ gives
\[
2\beta+2
=
\omega_2(K_\lambda)
\le
\mu_1+\mu_2
=
2\beta+3-\mu_3.
\]
Thus $\mu_3\le1$. If $\ell(\mu)=3$, then $\mu_3$ is a positive part, and hence $\mu_3=1$.
\end{proof}

\Cref{lem:support-restriction} restricts our calculation to a small family of partitions for proving \cref{thm:e+.cmg}.
Next, we derive a marker-variable formula valid for every complete multipartite graph.

\begin{theorem}\label[theorem]{thm:general-coefficient}
Let $\eta=(\eta_1,\,\dots,\, \eta_{\ell(\eta)})\vdash n$. Assign a marker variable~$u_i$ to the $i$th partite set of~$K_\eta$, set $u=(u_1,\ldots,u_{\ell(\eta)})$, and write
\[
u^\eta
=
u_1^{\eta_1}\cdots
u_{\ell(\eta)}^{\eta_{\ell(\eta)}}
\]
for the corresponding \emph{marker monomial}. Then
\[
X_{K_\eta}
=
\eta!
\sum_{\mu\vdash n}
\coeff{u^\eta}{m_\mu(y)}\,e_\mu(x),
\]
where $y=(y_1,\dots,y_{\eta_1})$ is the sequence of roots of the polynomial
\[
z^{\eta_1}
-E_1z^{\eta_1-1}
+E_2z^{\eta_1-2}
-\dots
+(-1)^{\eta_1}E_{\eta_1},
\]
whose coefficients are given, for each integer~$j$ with $1\le j\le \eta_1$, by
\[
E_j
=
\frac{1}{j!}
\sum_{\substack{1\le i\le\ell(\eta),\ \eta_i\ge j}}
u_i^j.
\]
\end{theorem}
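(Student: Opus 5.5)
The plan is to write $X_{K_\eta}$ as a coefficient of a product over colors, one factor per color, and then factor each color's factor into linear pieces so that \cref{prop:cauchy} applies. Two earlier results do the work: \cref{prop:vieta} identifies the factorization, and \cref{prop:cauchy} converts the resulting double product into the $e$–$m$ expansion.

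\emph{Step 1: a marker generating function for colorings.} A map $\kappa$ is a proper coloring of $K_\eta$ exactly when no color is shared by two different partite sets; inside a partite set there are no edges, so colors may repeat. Fix for each color $j$ the partite set $i$ that uses it (if any) and the number $k_j\ge1$ of its vertices colored $j$. The number of colorings of the $i$th partite set realizing prescribed multiplicities $(k_j)$ with $\sum_j k_j=\eta_i$ is the multinomial coefficient $\eta_i!/\prod_j k_j!$. Summing over all such data gives
\[
X_{K_\eta}
=\eta!\,\coeff{u^\eta}{\prod_{j\ge1}\Bigl(1+\sum_{i=1}^{\ell(\eta)}\sum_{k\ge1}\frac{u_i^k x_j^k}{k!}\Bigr)}.
\]
The marker $u_i^{k}$ records that color $j$ is used $k$ times in partite set $i$. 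Extracting $u^\eta$ forces each partite set to have exactly $\eta_i$ vertices, and choosing at most one term per factor forces each color to be used by at most one partite set.

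\emph{Step 2: truncation.} Since we only extract the coefficient of $u^\eta$, any term containing $u_i^k$ with $k>\eta_i$ can be discarded. Formally, we compute modulo the monomial ideal generated by $u_i^{\eta_i+1}$, $1\le i\le\ell(\eta)$; coefficient extraction at $u^\eta$ factors through this quotient. After the truncation, the $j$th factor becomes
\[
1+E_1x_j+E_2x_j^2+\dots+E_{\eta_1}x_j^{\eta_1},
\qquad
E_k=\frac1{k!}\sum_{i:\,\eta_i\ge k}u_i^k,
\]
because $\eta_1$ is the largest part, so no $k>\eta_1$ survives. I expect this step to be the main point requiring care. The issue is stating cleanly that truncating to $\eta_i\ge k$ commutes with the infinite product and with the later manipulations. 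Working degree-by-degree in $x$, or in the quotient ring described above, handles this.

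\emph{Step 3: factorization and Cauchy.} Let $y=(y_1,\dots,y_{\eta_1})$ be the roots of the stated polynomial in a splitting algebra over $\mathbb Q[u]$. By \cref{prop:vieta}, $e_k(y)=E_k$, and therefore
\[
\prod_{a=1}^{\eta_1}(1+y_at)=\sum_{k=0}^{\eta_1}E_kt^k .
\]
Setting $t=x_j$ and taking the product over $j$, \cref{prop:cauchy} gives
\[
\prod_{j}\prod_{a}(1+x_jy_a)=\sum_\mu e_\mu(x)m_\mu(y).
\]
Each $m_\mu(y)$ is symmetric in $y$, hence a polynomial in the $E_k$, and so lies in $\mathbb Q[u]$.

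\emph{Step 4: restricting to $\mu\vdash n$.} The polynomial $E_k$ is homogeneous of degree $k$ in $u$. Hence $m_\mu(y)$, which is weighted-homogeneous of degree $|\mu|$ in the $E_k$, is homogeneous of degree $|\mu|$ in $u$. Its coefficient of $u^\eta$ therefore vanishes unless $|\mu|=n$. Combining Steps 1–4 and extracting the coefficient of $u^\eta$ termwise yields
\[
X_{K_\eta}=\eta!\sum_{\mu\vdash n}\coeff{u^\eta}{m_\mu(y)}\,e_\mu(x),
\]
which is the assertion of \cref{thm:general-coefficient}.
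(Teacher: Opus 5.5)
Your proposal is correct and follows essentially the same route as the paper: a product over colors with exponentially weighted markers and the factor $\eta!$, the factorization $1+\sum_{k}E_kx_j^k=\prod_a(1+x_jy_a)$ via \cref{prop:vieta}, then \cref{prop:cauchy}, and a homogeneity count restricting to $\mu\vdash n$. The only difference is that you start from the untruncated series $\sum_{k\ge1}u_i^kx_j^k/k!$ and then discard terms divisible by some $u_i^{\eta_i+1}$. The paper instead builds the restriction $k\le\eta_i$ into each color's factor from the start, since a color cannot occur more than $\eta_i$ times in the $i$th partite set, so the step you flagged as delicate is not actually needed.
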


The coefficient extraction in \cref{thm:general-coefficient} is well defined: expand the monomial symmetric polynomial~$m_\mu(y)$ in the elementary symmetric polynomials~$e_i(y)$, which equal~$E_i$ by \cref{prop:vieta}. Thus
\[
m_\mu(y)
\in
\mathbb{Q}[E_1,\ldots,E_{\eta_1}]
\subseteq
\mathbb{Q}[u_1,\ldots,u_{\ell(\eta)}].
\]

\begin{proof}
Write $V_i$ for the partite set marked by~$u_i$, so $\lvert V_i\rvert=\eta_i$. For a fixed color~$c$, the proper-coloring condition permits~$c$ either to be unused or to occur $j$ times in exactly one partite set. Multiplying over all colors, extracting the marker monomial that records all vertices, and multiplying by the scalar factor $\eta!$ to restore the labelings within the partite sets gives
\[
X_{K_\eta}
=
\eta!
\,\coeff{u^\eta}{
\prod_{c\ge1}
\left(1+\sum_{j=1}^{\eta_1}E_jx_c^j\right)}.
\]
By \cref{prop:vieta,prop:cauchy},
\[
\prod_{c\ge1}
\left(1+\sum_{j=1}^{\eta_1}E_jx_c^j\right)
=
\prod_{c\ge1}\prod_{a=1}^{\eta_1}(1+x_cy_a)
=
\sum_\mu e_\mu(x)m_\mu(y).
\]
Extracting the marker monomial forces total degree~$n$, and only partitions~$\mu\vdash n$ remain, proving the desired identity.
\end{proof}

For the coefficient extraction in \cref{thm:general-coefficient}, the computation may be carried out in the quotient ring
\[
\mathbb{Q}[u_1,\,\dots,\,u_{\ell(\eta)}]
\Big/
\brk2{u_1^{\eta_1+1},\,\dots,\,u_{\ell(\eta)}^{\eta_{\ell(\eta)}+1}}, 
\]
since every monomial containing a power of~$u_i$ greater than~$\eta_i$ may be ignored.
This observation is essential for simplifying the calculations below.

We now apply \cref{thm:general-coefficient} to the specific graph~$K_\lambda$. Use the marker~$u$ for the part of size~$3$ and markers~$v_1,\dots,v_\beta$ for the parts of size~$2$, and set
\[
M=u^3v_1^2v_2^2\dotsm v_\beta^2.
\]
With these markers, the root sequence~$y=(y_1,\,y_2,\,y_3)$ in \cref{thm:general-coefficient} has elementary symmetric polynomials
\[
E_1=u+\sum_{i=1}^{\beta}v_i,\qquad
E_2=\frac{u^2+\sum_{i=1}^{\beta}v_i^2}{2},\qquad\text{and}\qquad
E_3=\frac{u^3}{6}.
\]
Consequently, for every partition~$\mu\vdash2\beta+3$,
\begin{equation}\label{eq:start-coefficient}
\coeff{e_\mu}{X_{K_\lambda}}
=\lambda!\,\coeff{M}{m_\mu(y)}.
\end{equation}

For the surviving coefficients, every term divisible by~$E_3^2=u^6/36$ has $u$-degree at least~$6$, whereas the marker monomial~$M$ has $u$-degree~$3$. Such terms have zero coefficient of~$M$, so all subsequent calculations may be performed modulo~$E_3^2$. The monomials that remain have the form $E_2^sE_1^t$ or $E_2^sE_1^tE_3$. The next lemma computes the marker coefficients of precisely these two types of monomials and thereby provides the coefficient extractions needed to implement \cref{eq:start-coefficient} in \cref{sec:evaluation}.

\begin{lemma}\label[lemma]{lem:marker-extraction}
Let $s,t\ge0$ and $r\in\{0, 1\}$ such that $3r+2s+t=2\beta+3$. Then
\[
\lambda!\,\coeff{M}{E_3^rE_2^sE_1^t}
=
s!t!\brk4{\binom{\beta}{s}+3\binom{\beta}{s-1}\chi(r=0)}, 
\]
where $\chi$ is the indicator function.
\end{lemma}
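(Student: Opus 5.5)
The plan is to expand $E_3^rE_2^sE_1^t$ as a product of $r+s+t$ ordered factors and count, factor by factor, the ways to build the marker monomial $M=u^3v_1^2\dotsm v_\beta^2$. Each factor contributes one of its terms: $E_1$ contributes a single marker, $E_2$ contributes $w^2/2$ for some marker $w$, and $E_3$ contributes $u^3/6$. The coefficient of $M$ is then a weighted sum of multinomial counts. The constraint $3r+2s+t=2\beta+3$ only guarantees that the total degree matches. The real restriction is that no marker exceed its allowed exponent: $u$ appears at most to the third power and each $v_i$ at most squared. I will then multiply by $\lambda!=6\cdot 2^\beta$.

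\emph{Case $r=1$.} The factor $E_3=u^3/6$ uses up all of $u$, so every other factor must avoid $u$. Each $E_2$ factor then contributes some $v_i^2/2$, and since each $v_i$ has exponent at most $2$, the $s$ factors of $E_2$ must use distinct indices. There are $\binom{\beta}{s}s!$ ordered choices, with weight $2^{-s}$. The remaining $\beta-s$ variables must each receive exponent $2$ from the $t=2(\beta-s)$ factors of $E_1$, which gives the multinomial count $t!/2^{\beta-s}$. Multiplying these and then by $6\cdot2^\beta$ gives
\[
\binom{\beta}{s}s!\,t!.
\]
This matches the formula, because the indicator $\chi(r=0)$ vanishes here.

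\emph{Case $r=0$.} Here $u^3$ must come from the $E_2$ and $E_1$ factors. Since $u$ has exponent at most $3$, at most one $E_2$ factor can contribute $u^2$, which leaves exactly two subcases.

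\begin{itemize}
\item \emph{(A) $u^3$ comes from three $E_1$ factors.} The $E_2$ factors again give distinct $v_i^2$, with weight $\binom{\beta}{s}s!\,2^{-s}$. The $E_1$ factors distribute as $u^3\prod v_i^2$ over the remaining variables, giving the count $t!/(3!\,2^{\beta-s})$. After multiplying by $6\cdot2^\beta$, this contributes $\binom{\beta}{s}s!\,t!$.
\item \emph{(B) one $E_2$ factor gives $u^2/2$ and one $E_1$ factor gives $u$.} There are $s$ choices for the $E_2$ factor that supplies $u^2$. The other $s-1$ factors of $E_2$ give distinct $v_i^2$, with weight $\binom{\beta}{s-1}(s-1)!\,2^{-(s-1)}$. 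The $E_1$ factors form $u\prod v_i^2$ over $\beta-s+1$ variables, giving the count $t!/2^{\beta-s+1}$. Collecting the powers of $2$, the total is $s!\binom{\beta}{s-1}t!/2^{\beta+1}$. Multiplying by $6\cdot 2^\beta$ gives $3\binom{\beta}{s-1}s!\,t!$.
\end{itemize}

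Summing (A) and (B) gives the claimed formula for $r=0$. When $s=0$, subcase (B) is empty, and this agrees with the convention $\binom{\beta}{-1}=0$.

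The main obstacle is bookkeeping rather than any conceptual step. It consists of three things. First, every distribution must be accounted for exactly once. Second, excluded configurations, such as two $E_2$ factors hitting the same $v_i$, must be ruled out via the exponent bound; this is exactly the quotient-ring observation following \cref{thm:general-coefficient}. Third, the factors of $1/2$ and $1/6$ must cancel cleanly against $\lambda!$. I would double-check the cancellation against the computed case $\beta=1$.
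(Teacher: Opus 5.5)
Your proposal is correct and is essentially the paper's proof: the same split into $r=1$ and $r=0$, with the latter divided into the two ways of supplying $u^3$ (entirely from $E_1^t$, or $u^2/2$ from one $E_2$ factor plus one $u$ from $E_1^t$). The multinomial counts are the same and cancel against $\lambda!=6\cdot 2^\beta$ exactly as in the paper.
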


\begin{proof}
Suppose first that $r=1$. The factor $E_3=u^3/6$ supplies all three copies of~$u$ in the marker monomial~$M$. Choose the $s$ variables whose squared terms~$v_i^2/2$ are supplied by~$E_2^s$; the factor~$E_1^t$ then supplies two copies of each of the remaining $\beta-s$ variables. Since $2s+t=2\beta$, multiplying the resulting coefficient by~$\lambda!$ gives
\[
\lambda!
\,\coeff{M}{E_3 E_2^s E_1^t}
=
\lambda!
\binom{\beta}{s}
\frac{s!}{2^s}
\frac{t!}{3!\,2^{\beta-s}}
=
s!t!\binom{\beta}{s}.
\]

Now suppose that $r=0$. There are exactly two ways to supply the factor~$u^3$ in~$M$. First, the factor~$E_1^t$ may supply all three copies of~$u$. In this case, choose the $s$ variables whose squared terms~$v_i^2/2$ are supplied by~$E_2^s$; the factor~$E_1^t$ also supplies two copies of each of the remaining $\beta-s$ variables. This contribution, after multiplication by~$\lambda!$, is
\[
\lambda!
\binom{\beta}{s}
\frac{s!}{2^s}
\frac{t!}{3!\,2^{\beta-s}}
=
s!t!\binom{\beta}{s}.
\]
Second, the factor~$E_2^s$ may supply $u^2/2$, while~$E_1^t$ supplies the remaining copy of~$u$. The other $s-1$ selections from~$E_2^s$ supply squared terms~$v_i^2/2$, and~$E_1^t$ supplies two copies of each of the remaining variables. This contribution is
\[
\lambda!
\binom{\beta}{s-1}
\frac{s!}{2^s}
\frac{t!}{2^{\beta-s+1}}
=
3s!t!\binom{\beta}{s-1}.
\]
Summing the two contributions yields the stated formula.
\end{proof}

Building on the preceding reductions, the next lemma rewrites the power sums that occur in the remaining coefficient extractions modulo~$E_3^2$ in terms of Dickson polynomials, thereby reducing their evaluation to the marker coefficients supplied by \cref{lem:marker-extraction}.

\begin{lemma}[First-order power-sum reduction]\label[lemma]{lem:power-sums}
Let $m\ge2$, $y=(y_1,\ldots,y_m)$, and $E_j=e_j(y)$. Let $z=(z_1,\ldots,z_{m-1})$ be a sequence of elements in a splitting algebra of $\mathbb Q[E_1, \dots, E_{m-1}]$ such that $e_j(z)=E_j$ for $j\le m-1$. Then, for any~$k\ge1$,
\[
p_k(y)
\equiv
p_k(z)+(-1)^{m-1}k\,h_{k-m}(z)E_m
\pmod{E_m^2}, 
\]
where the power-sum symmetric polynomial $p_k(z)$ and the complete homogeneous symmetric polynomial~$h_{k-m}(z)$ are understood as polynomials in the variables~$E_1,\ldots,E_{m-1}$, and the congruence takes place in the quotient ring~$\mathbb{Q}[E_1,\ldots,E_m]/(E_m^2)$. In particular, for any $k\ge 2$,
\begin{equation}\label{p.DE}
p_k(y)
\equiv
D_k(E_1,E_2)
+k\mathcal E_{k-3}(E_1,E_2)E_3
\pmod{E_3^2}.
\end{equation}
\end{lemma}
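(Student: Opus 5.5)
The plan is to treat $E_m$ as a small perturbation of the polynomial whose roots are $z$ (extended by a zero root) and to linearize in $E_m$.

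\textbf{Step 1: the generating function for $p_k(y)$.} By \cref{prop:vieta},
\[
\prod_{a=1}^m(1-y_at)=1-E_1t+\dots+(-1)^mE_mt^m .
\]
Set $Q(t)=\prod_{b=1}^{m-1}(1-z_bt)=\sum_{j=0}^{m-1}(-1)^jE_jt^j$. Then
\[
\prod_a(1-y_at)=Q(t)+(-1)^mE_mt^m .
\]

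\textbf{Step 2: logarithmic expansion.} Use
\[
\sum_{k\ge1}p_k(y)\frac{t^k}{k}=-\log\prod_a(1-y_at).
\]
Write $\log\bigl(Q+(-1)^mE_mt^m\bigr)=\log Q+\log\bigl(1+(-1)^mE_mt^m/Q\bigr)$. Modulo $E_m^2$ the second logarithm truncates to its linear term, so
\[
\sum_{k\ge1}p_k(y)\frac{t^k}{k}\equiv\sum_{k\ge1}p_k(z)\frac{t^k}{k}+(-1)^{m-1}E_mt^m\,\frac1{Q(t)} .
\]
Since $1/Q(t)=\sum_{r\ge0}h_r(z)t^r$, comparing coefficients of $t^k$ and multiplying by $k$ gives
\[
p_k(y)\equiv p_k(z)+(-1)^{m-1}k\,h_{k-m}(z)E_m .
\]
This uses the convention $h_r=0$ for $r<0$. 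Everything here happens in formal power series over $\mathbb Q[E_1,\dots,E_m]/(E_m^2)$. This ring is torsion-free over $\mathbb Q$, so division by $k$ and the use of the logarithm are legitimate.

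\textbf{Step 3: interpretation in $E_1,\dots,E_{m-1}$.} The power sums $p_k(z)$ and the complete homogeneous $h_r(z)$ are polynomials in $e_j(z)=E_j$, by the Newton identities and \cref{prop:complete-to-elementary}. So both sides are genuine elements of the quotient ring, and the splitting algebra is used only for notation. As an alternative to Step 2, one could avoid logarithms by applying Newton's identities (\cref{prop:newton-girard}) inductively to both sequences and tracking the linear term in $E_m$. The log route is cleaner.

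\textbf{Step 4: the case $m=3$.} Here $z=(z_1,z_2)$ with $e_1(z)=E_1$ and $e_2(z)=E_2$. \Cref{def:Dickson1.uv,def:Dickson2.uv} give $p_k(z)=D_k(E_1,E_2)$ for $k\ge1$ and $h_{k-3}(z)=\mathcal E_{k-3}(E_1,E_2)$. The sign is $(-1)^{2}=1$. When $k=2$ the convention $\mathcal E_{-1}=0$ matches $h_{-1}=0$. This yields \cref{p.DE} for $k\ge2$.

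The only delicate point is Step 2: justifying the linear truncation of the logarithm modulo $E_m^2$ and keeping the sign $(-1)^{m-1}$ straight. Everything else is bookkeeping.
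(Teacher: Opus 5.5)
Your proposal is correct and follows essentially the same route as the paper. The paper also writes $\prod_a(1-y_at)=f^*_{m-1}(t)+(-1)^mE_mt^m$ with $f^*_{m-1}(t)=\prod_b(1-z_bt)$, takes $-\log$, truncates $\log\bigl(1+(-1)^mE_mt^m/f^*_{m-1}(t)\bigr)$ to its linear term modulo $E_m^2$, expands $1/f^*_{m-1}(t)=\sum_r h_r(z)t^r$, compares coefficients, and then specializes to $m=3$ via the Dickson definitions; your handling of $k=2$ through the convention $\mathcal E_{-1}=h_{-1}=0$ is an equally valid substitute for the paper's remark that this case holds trivially.
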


\begin{proof}
By \cref{prop:vieta},
\[
\sum_{j=0}^{m-1}(-1)^jE_jt^{m-1-j}
=
\prod_{j=1}^{m-1}(t-z_j).
\]
Define $f_{m-1}(t)\in\mathbb Q[E_1, \dots, E_{m-1}][t]$ to be the polynomial above, and define
\[
f_{m-1}^*(t)
=
t^{m-1}f_{m-1}(t^{-1}).
\]
Then
\[
f_{m-1}^*(t)
=
\prod_{j=1}^{m-1}(1-z_j t)
=
\sum_{j=0}^{m-1}(-1)^jE_jt^j
\in\mathbb Q[E_1, \dots, E_{m-1}][t].
\]
Since $E_j=e_j(y)$, we also have
\begin{equation}\label{eq:f+=y}
f_{m-1}^*(t)+(-1)^mE_mt^m
=
\sum_{j=0}^m(-1)^jE_jt^j
=
\prod_{i=1}^m(1-y_i t).
\end{equation}
Set $A=\mathbb{Q}[E_1,\ldots,E_m]$. Since $f_{m-1}^*(0)=1$, we have
\[
\frac{1}{f_{m-1}^*(t)}
=
\prod_{j=1}^{m-1}\frac{1}{1-z_j t}
=
\sum_{k\ge0}h_k(z)t^k
\in\mathbb{Q}[E_1,\ldots,E_{m-1}][[t]]
\subseteq A[[t]].
\]
Hence
\[
\frac{(-1)^mE_mt^m}{f_{m-1}^*(t)}\in E_mA[[t]],
\]
and its square belongs to the ideal $E_m^2A[[t]]$. Thus, after passing to the ring $(A/(E_m^2))[[t]]$ of formal power series over the quotient ring $A/(E_m^2)$, only the linear term in this series survives. Note that the identity
\[
-\log(1-u)
=\sum_{k\ge1}\frac{u^k}{k}
=u+\sum_{k\ge2}\frac{u^k}{k}
\]
holds for formal power series whenever $u$ has zero constant term. Thus, using \cref{eq:f+=y} in the formal power-series ring $\mathbb{Q}[E_1,\ldots,E_m]/(E_m^2)[[t]]$, we obtain
\begin{align*}
\sum_{k\ge1}\frac{p_k(y)}{k}t^k
&=\sum_{i=1}^m\sum_{k\ge1}\frac{(y_i t)^k}{k}
=-\sum_{i=1}^m\log(1-y_i t)
=-\log\brk1{f_{m-1}^*(t)+(-1)^mE_mt^m}\\
&=-\log f_{m-1}^*(t)
-\log\brk3{1+\frac{(-1)^mE_mt^m}{f_{m-1}^*(t)}}\\
&\equiv-\log f_{m-1}^*(t)
-\frac{(-1)^mE_mt^m}{f_{m-1}^*(t)}
\pmod{E_m^2}.
\end{align*}
On the other hand,
\[
-\log f_{m-1}^*(t)
=-\sum_{j=1}^{m-1}\log(1-z_j t)
=\sum_{j=1}^{m-1}\sum_{k\ge1}\frac{(z_j t)^k}{k}
=\sum_{k\ge1}\frac{p_k(z)}{k}t^k.
\]
Combining this identity with the formula for $1/f_{m-1}^*(t)$ above gives
\[
\sum_{k\ge1}\frac{p_k(y)}{k}t^k
\equiv
\sum_{k\ge1}\frac{p_k(z)}{k}t^k
{}+(-1)^{m-1}E_mt^m\sum_{k\ge0}h_k(z)t^k
\pmod{E_m^2}.
\]
Comparing coefficients of~$t^k/k$ proves the general formula.

The desired \cref{p.DE} holds trivially for $k=2$. For $k\ge 3$, by \cref{def:Dickson1.uv,def:Dickson2.uv},
\[
p_k(z)=D_k(E_1,E_2)
\quad\text{and}\quad
h_{k-3}(z)=\mathcal E_{k-3}(E_1,E_2).
\]
Taking $m=3$ in the general formula above, we obtain \cref{p.DE}.
\end{proof}

Applied with $m=3$, \cref{lem:power-sums} expresses the power sums needed below modulo~$E_3^2$ in terms of Dickson polynomials, so \cref{lem:marker-extraction} reduces their contributions to explicit binomial coefficients.

\section{Evaluation of the \texorpdfstring{$e$}{e}-coefficients}
\label{sec:evaluation}

By \cref{lem:support-restriction}, the potentially nonzero coefficients are indexed by the following three classes of partitions:
\begin{itemize}
\item the top partition~$(2\beta+3)$;
\item the length-two partitions~$(2\beta+2,\,1)$ and $(\beta+2+d,\,\beta+1-d)$, where $0\le d\le\beta-1$;
\item the length-three partitions~$(\beta+d+1,\,\beta+1-d,\,1)$, where $0\le d\le\beta$.
\end{itemize}
We evaluate these families in \cref{sec:top-coefficient,sec:length-two,sec:length-three}, respectively, and then prove \cref{thm:e+.cmg}. Throughout this section, we suppress the arguments~$(E_1,E_2)$ and write the evaluations of Dickson polynomials $D_i$ and~$\mathcal E_i$ for $D_i(E_1,E_2)$ and~$\mathcal E_i(E_1,E_2)$, respectively.

\subsection{The top partition}\label{sec:top-coefficient}
We begin with the top partition~$(2\beta+3)$.

\begin{lemma}\label[lemma]{lem:coeff.2b+3}
For every integer~$\beta\ge1$, we have $\coeff{e_{2\beta+3}}{X_{K_\lambda}}=\beta!\,\beta(2\beta+3)a_{\beta+1}$.
\end{lemma}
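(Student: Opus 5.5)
By \cref{eq:start-coefficient} with $\mu=(n)$, where $n=2\beta+3$, we have $m_{(n)}(y)=p_n(y)$. Hence
\[
\coeff{e_n}{X_{K_\lambda}}=\lambda!\,\coeff{M}{p_n(y)}.
\]
Since $n\ge5$, \cref{p.DE} of \cref{lem:power-sums} lets me replace $p_n(y)$ by $D_n+n\,\mathcal E_{n-3}E_3$ modulo $E_3^2$. Terms divisible by $E_3^2$ do not contribute to the coefficient of~$M$, so this substitution is harmless. I would then expand both Dickson polynomials using the explicit sums \cref{def:Dickson1.sum,def:Dickson2.sum}. This writes the coefficient as a linear combination of monomials $E_3^rE_2^qE_1^t$ with $3r+2q+t=n$, so \cref{lem:marker-extraction} applies to each monomial.

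For the $D_n$ part ($r=0$, $s=q$, $t=n-2q$), \cref{lem:marker-extraction} yields the contribution
\[
(-1)^q\frac{n}{n-q}\binom{n-q}{q}q!(n-2q)!\Bigl[\tbinom{\beta}{q}+3\tbinom{\beta}{q-1}\Bigr].
\]
The prefactor simplifies to $n\,(n-q-1)!=n\,(2\beta+2-q)!$. For the $E_3$ part ($r=1$, $s=q$, $t=n-3-2q$), the contribution is
\[
n(-1)^q\binom{n-3-q}{q}q!(n-3-2q)!\tbinom{\beta}{q},
\]
and the prefactor simplifies to $n\,(2\beta-q)!$. Dividing out $n=2\beta+3$, the claim becomes
\[
\sum_q(-1)^q\Bigl[(2\beta+2-q)!\Bigl(\tbinom{\beta}{q}+3\tbinom{\beta}{q-1}\Bigr)+(2\beta-q)!\tbinom{\beta}{q}\Bigr]
=\beta!\,\beta\,a_{\beta+1}.
\]
By \cref{a.sum}, the right-hand side equals $\frac{\beta}{\beta+1}\sum_q(-1)^q\binom{\beta+1}{q}(2\beta+2-q)!$.

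The main obstacle is this final binomial-sum identity. The two sides involve factorials $(2\beta+2-q)!$ and $(2\beta-q)!$ with different binomial weights. My first attempt would be to write everything in terms of the sums
\[
S_j=\sum_q(-1)^q\tbinom{\beta}{q}(2\beta+j-q)!.
\]
I would use Pascal's identity \cref{eq:pascal} to split $\binom{\beta+1}{q}$, and shift $q\mapsto q+1$ in the $\binom{\beta}{q-1}$ terms so that they become $-S_1$-type sums. One also has $S_0=\beta!\,a_\beta$ by \cref{a.sum}, and $S_{-1}$ is proportional to $\Delta_\beta$ by \cref{lem:delta}. Further relations come from $(2\beta+2-q)!=(2\beta+2-q)(2\beta+1-q)(2\beta-q)!$ combined with \cref{eq:binomial-right,eq:binomial-successor}, which should reduce $S_1$ and $S_2$ to combinations of $S_0$ and $S_{-1}$. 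The identity should then reduce to the recurrence \cref{a.rec} (equivalently \cref{lem:delta-recurrence}) relating $a_{\beta+1}$, $a_\beta$, $a_{\beta-1}$. If the bookkeeping proves messy, I would instead check small cases against $\beta=1,2$ (values $35$ and $1988$) and derive the identity by induction on~$\beta$ using the same recurrence.
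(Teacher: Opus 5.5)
The proof is incomplete at its one nontrivial step. Up to that point your setup follows the paper exactly: \cref{eq:start-coefficient}, \cref{p.DE}, the explicit Dickson sums, and \cref{lem:marker-extraction}. Your simplified prefactors $n(2\beta+2-q)!$ and $n(2\beta-q)!$ are correct. In your notation the claim is $S_2-3S_1+S_0=\frac{\beta}{\beta+1}(S_2-S_1)$. After clearing denominators this is exactly the three-term relation
\[
S_2-(2\beta+3)S_1+(\beta+1)S_0=0 .
\]
Once the setup is done, this relation is all that remains, and you do not prove it. Rewriting $S_1,S_2$ through $S_0$ and $S_{-1}$ and then invoking \cref{a.rec} is stated only as an expectation. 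It is also unnecessary: neither $S_{-1}$ nor the recurrence is needed. The fallback is not an argument either. Checking $\beta=1,2$ proves nothing, and you describe no inductive step, which would not be routine because $\beta$ enters both the binomial coefficients and the factorials.

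The missing step is one application of \cref{eq:binomial-successor} with $n=\beta$. Rewrite $(\beta-q)\binom{\beta}{q}=(q+1)\binom{\beta}{q+1}$ and shift $q$ to get
\[
\sum_{q=0}^{\beta}(-1)^q\binom{\beta}{q}\bigl((\beta-q)(2\beta-q)!+q\,(2\beta+1-q)!\bigr)=0.
\]
This is precisely the vanishing correction term $\sum_q(-1)^q\binom{\beta}{q}(qz_q+\beta)z_q!=0$ in the paper's proof, with $z_q=2\beta-q$. Substituting $q=2\beta-z_q$ gives $(qz_q+\beta)z_q!=-(z_q+2)!+(2\beta+3)(z_q+1)!-(\beta+1)z_q!$, so this is your three-term relation.

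Alternatively, apply Pascal's identity to \cref{lem:delta} at $d=\beta+1$, exactly as you did with \cref{a.sum}. This gives $S_1-S_0=\frac{\beta!}{2}\Delta_{\beta+1}$, and then
\[
S_2-3S_1+S_0=(S_2-S_1)-2(S_1-S_0)-S_0=(\beta+1)!\,a_{\beta+1}-\beta!\,(a_{\beta+1}-a_\beta)-\beta!\,a_\beta=\beta!\,\beta\,a_{\beta+1}.
\]
This is shorter than the paper's decomposition of $z_q^2$, at the cost of using the imported identity \cref{lem:delta} rather than only \cref{a.sum} and elementary binomial identities. With either fix your argument is complete and agrees in substance with the paper's.
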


\begin{proof}
By \cref{eq:start-coefficient}, the desired coefficient is $\lambda!\,\coeff{M}{m_{2\beta+3}(y)}$. Since $m_{2\beta+3}=p_{2\beta+3}$, \cref{p.DE} gives
\[
m_{2\beta+3}(y)
\equiv D_{2\beta+3}+(2\beta+3)\mathcal E_{2\beta}E_3
\pmod{E_3^2}.
\]
We now extract the contribution of each summand. 

For convenience, set $z_q=2\beta-q$.
By \cref{def:Dickson1.sum,lem:marker-extraction},
\begin{align*}
\lambda!\,\coeff{M}{D_{2\beta+3}}
&=\lambda!\,\coeff{M}{
\sum_{q=0}^{\beta+1}
\frac{2\beta+3}{z_q+3}\binom{z_q+3}{q}
(-E_2)^qE_1^{z_q-q+3}}\\
&=(2\beta+3)\sum_{q=0}^{\beta+1}(-1)^q(z_q+2)!
\brk4{\binom{\beta}{q}+3\binom{\beta}{q-1}}\\
&=(2\beta+3)\sum_{q=0}^{\beta}(-1)^q\binom{\beta}{q}
\brk1{(z_q+2)!-3(z_q+1)!}.
\end{align*}
Similarly, \cref{def:Dickson2.sum,lem:marker-extraction} gives
\begin{align*}
\lambda!\,\coeff{M}{(2\beta+3)\mathcal E_{2\beta}E_3}
&=(2\beta+3)\sum_{q=0}^{\beta}(-1)^q
\binom{z_q}{q}q!(z_q-q)!\binom{\beta}{q}\\
&=(2\beta+3)\sum_{q=0}^{\beta}(-1)^q
\binom{\beta}{q}z_q!.
\end{align*}
Adding the two contributions, we obtain $\coeff{e_{2\beta+3}}{X_{K_\lambda}}=(2\beta+3)S_\beta$, where 
\[
S_\beta
=\sum_{q=0}^{\beta}(-1)^q\binom{\beta}{q}z_q^2z_q!.
\]

It remains to identify the alternating sum~$S_\beta$. Since $z_q=2\beta-q$, we have
\[
z_q^2
=\frac{\beta(z_q+1)^2-(qz_q+\beta)}{\beta+1}.
\]
The correction term in this decomposition vanishes after summation. Indeed, the local identity
\[
(qz_q+\beta)z_q!
=q(z_q+1)!+(\beta-q)z_q!
\]
and \cref{eq:binomial-left,eq:binomial-right} with $n=\beta$ give
\begin{align*}
\sum_{q=0}^{\beta}(-1)^q\binom{\beta}{q}
\brk1{qz_q+\beta}z_q!
&=\sum_{q=1}^{\beta}(-1)^q
\beta\binom{\beta-1}{q-1}(z_q+1)!
{}+\sum_{q=0}^{\beta-1}(-1)^q
\beta\binom{\beta-1}{q}z_q!
=0.
\end{align*}
Consequently,
\begin{align*}
S_\beta
&=\frac{\beta}{\beta+1}\sum_{q=0}^{\beta}(-1)^q
\binom{\beta}{q}(z_q+1)^2z_q!\\
&=\frac{\beta}{\beta+1}\sum_{q=0}^{\beta}(-1)^q
\binom{\beta}{q}\brk1{(z_q+2)!-(z_q+1)!\,}.
\end{align*}
Shifting the index in the subtracted sum and applying \cref{eq:pascal,a.sum}, we find
\[
S_\beta
=\frac{\beta}{\beta+1}
\sum_{q=0}^{\beta+1}(-1)^q
\binom{\beta+1}{q}(z_q+2)!
=\frac{\beta}{\beta+1}(\beta+1)!a_{\beta+1}
=\beta\,\beta!a_{\beta+1}.
\]
Multiplying by~$2\beta+3$ proves the result.
\end{proof}

An alternative proof of \cref{lem:coeff.2b+3} using acyclic orientations is given in the appendix.

\subsection{The length-two partitions}
\label{sec:length-two}

We first deal with the endpoint partition~$(2\beta+2,\,1)$.

\begin{lemma}\label[lemma]{lem:two-part-endpoint-coefficient}
For any $\beta\ge1$, we have $\coeff{e_{(2\beta+2,\,1)}}{X_{K_\lambda}} =\beta!\brk1{a_{\beta+1}+(2\beta^2+\beta-1)a_\beta}$.
\end{lemma}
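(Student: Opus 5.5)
By \cref{eq:start-coefficient}, the coefficient equals $\lambda!\,\coeff{M}{m_{(2\beta+2,\,1)}(y)}$. The plan has four steps: rewrite this monomial in power sums, reduce modulo $E_3^2$ via \cref{p.DE}, extract marker coefficients with \cref{lem:marker-extraction}, and identify the resulting alternating sums using \cref{a.sum}, \cref{lem:delta} and \cref{a.rec}.

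\textbf{Step 1: power sums and the mod-$E_3^2$ reduction.} Since the two parts are distinct, $m_{(2\beta+2,\,1)}=p_{2\beta+2}p_1-p_{2\beta+3}$ with $p_1(y)=E_1$. Applying \cref{p.DE} with $k=2\beta+2$ and $k=2\beta+3$ gives
\[
m_{(2\beta+2,\,1)}(y)\equiv E_1D_{2\beta+2}-D_{2\beta+3}
+(2\beta+2)E_1\mathcal E_{2\beta-1}E_3-(2\beta+3)\mathcal E_{2\beta}E_3
\pmod{E_3^2}.
\]
Rather than expanding this directly, one could instead use \cref{prop:newton-girard} to rewrite $p_{2\beta+3}$, so that $E_1D_{2\beta+2}$ cancels against the $e_1p_{2\beta+2}$ term. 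That leaves $E_2D_{2\beta+1}$, $-E_3D_{2\beta}$ and a correction term. Both routes end in sums over $q$ of $(-E_2)^qE_1^{\cdots}$ times at most one factor $E_3$.

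\textbf{Step 2: marker extraction.} Expand each Dickson polynomial using \cref{def:Dickson1.sum,def:Dickson2.sum}. Then apply \cref{lem:marker-extraction} term by term:
\begin{itemize}
\item terms without $E_3$ pick up $s!\,t!\bigl(\binom{\beta}{s}+3\binom{\beta}{s-1}\bigr)$;
\item terms with $E_3$ pick up $s!\,t!\binom{\beta}{s}$.
\end{itemize}
As in \cref{lem:coeff.2b+3}, write $z_q=2\beta-q$. The $\binom{z_q+c}{q}$ factors combine with $q!\,(z_q-q+c)!$ into single factorials, and the shifted binomial $\binom{\beta}{q-1}$ is reindexed. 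The result is the coefficient expressed as $\sum_{q=0}^{\beta}(-1)^q\binom{\beta}{q}P(z_q)\,z_q!$ for an explicit polynomial $P$ of small degree, possibly with a factor depending on $q$.

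\textbf{Step 3: identifying the sum (main obstacle).} The target is $\beta!\,a_{\beta+1}+(2\beta^2+\beta-1)\beta!\,a_\beta$. The available identities are:
\begin{itemize}
\item \cref{a.sum}, which gives $\beta!\,a_\beta=\sum_q(-1)^q\binom{\beta}{q}z_q!$;
\item \cref{a.sum} with Pascal's rule \cref{eq:pascal}, which gives $(\beta+1)!\,a_{\beta+1}=\sum_q(-1)^q\binom{\beta}{q}\bigl((z_q+2)!-(z_q+1)!\bigr)$;
\item \cref{lem:delta}, which gives $\beta!\,\Delta_\beta=2\beta\sum_q(-1)^q\binom{\beta}{q}(z_q-1)!$.
\end{itemize}
As in the proof of \cref{lem:coeff.2b+3}, I would decompose $P(z_q)z_q!$ into a combination of $(z_q+2)!$, $(z_q+1)!$, $z_q!$ and $(z_q-1)!$. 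Any leftover $q$-dependent pieces would be killed by the telescoping identities \cref{eq:binomial-left,eq:binomial-right}. Finally, \cref{a.rec} in the form $a_{\beta+1}=2(2\beta+1)a_\beta+a_{\beta-1}$, together with $\Delta_\beta=a_\beta-a_{\beta-1}$, would eliminate $a_{\beta-1}$ and $\Delta_\beta$ and collapse everything to the stated closed form.

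The delicate part is finding the right factorial-basis decomposition so that the non-telescoping residue is exactly a combination of $a_{\beta+1}$, $a_\beta$ and $\Delta_\beta$. I would first determine the coefficients by matching the small cases $\beta=1,2$ against the known values $9=1\cdot(7+2\cdot1)$ and $268=2\cdot(71+9\cdot7)$, then verify the decomposition symbolically.
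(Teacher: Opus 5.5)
Your pipeline is the paper's: \cref{eq:start-coefficient}, a power-sum rewrite, \cref{p.DE}, \cref{lem:marker-extraction}, and then \cref{a.sum,lem:delta,a.rec}. The Newton--Girard branch you mention as an alternative is exactly the one the paper takes; it reduces the problem to $E_2D_{2\beta+1}+\bigl((2\beta+1)E_2\mathcal E_{2\beta-2}-D_{2\beta}\bigr)E_3$. The gap is that Step~3, which is the entire content of the lemma, is never carried out. Your proposed way of finding it, fitting $\beta=1,2$, cannot determine coefficients that are polynomials in~$\beta$, so only the symbolic computation would count, and you do not do it. Also, write $F_j=\sum_{q=0}^{\beta}(-1)^q\binom{\beta}{q}(z_q+j)!$. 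Your three identities pin down only $F_0$, $F_{-1}$ and $F_2-F_1$, whereas the sums that actually occur on either branch include a lone $F_1$. On the Newton--Girard branch there is also a sum with $(z_q-2)!$. Each of these needs one further step.

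No search is needed. On the Newton--Girard branch the four sums are identified one at a time:
\begin{itemize}
\item $\sum_q(-1)^q\binom{\beta}{q}z_q!=\beta!\,a_\beta$.
\item $\sum_q(-1)^q\binom{\beta}{q}z_q\,z_q!=F_1-F_0=\sum_q(-1)^q\binom{\beta+1}{q}(z_q+1)!=\tfrac{\beta!}{2}\Delta_{\beta+1}$, by \cref{eq:pascal} and \cref{lem:delta} with $d=\beta+1$.
\item $\sum_q(-1)^q(\beta-q)\binom{\beta}{q}(z_q-2)!=\beta!\,a_{\beta-1}$, by \cref{eq:binomial-right} and \cref{a.sum} with $d=\beta-1$.
\item $2\beta F_{-1}=\beta!\,\Delta_\beta$.
\end{itemize}
Together these give $\beta!\bigl[(2\beta+1)\bigl((5\beta+3)a_\beta-\Delta_{\beta+1}+a_{\beta-1}\bigr)-\Delta_\beta\bigr]$. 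Then \cref{lem:delta-recurrence}, in the form $\Delta_{\beta+1}-a_{\beta-1}=(4\beta+1)a_\beta$, together with \cref{a.rec} finishes.

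Your direct branch also closes:
\begin{itemize}
\item The $-p_{2\beta+3}$ part contributes exactly $-\coeff{e_{2\beta+3}}{X_{K_\lambda}}=-\beta!\,\beta(2\beta+3)a_{\beta+1}$ by \cref{lem:coeff.2b+3}.
\item The $E_1p_{2\beta+2}$ part gives $(2\beta+2)\bigl(2F_2-(2\beta+7)F_1+(6\beta+5)F_0-2\beta F_{-1}\bigr)$.
\item The relations $F_2=(2\beta+3)F_1-(\beta+1)F_0$ and $F_1=(2\beta+2)F_0-\beta F_{-1}$ hold. Both are the vanishing, via \cref{eq:binomial-left,eq:binomial-right}, of $\sum_q(-1)^q\binom{\beta}{q}\bigl(q\,g(z_q+1)+(\beta-q)\,g(z_q)\bigr)$, as in \cref{lem:coeff.2b+3}, with $g(z)=z!$ and $g(z)=(z-1)!$.
\item These relations reduce the total to $\beta!\bigl((2\beta^2+5\beta+2)a_\beta-\Delta_\beta\bigr)$. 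This is the target, because $a_{\beta+1}=(4\beta+3)a_\beta-\Delta_\beta$ by \cref{a.rec}.
\end{itemize}
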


\begin{proof}
Applying \cref{prop:newton-girard} with $m=2\beta+3$ to the three-variable alphabet~$y$, and using $e_j(y)=0$ for $j>3$, we obtain
\[
p_{2\beta+3}
=
e_1p_{2\beta+2}-e_2p_{2\beta+1}+e_3p_{2\beta}.
\]
The definition of monomial symmetric functions, together with $p_1=e_1$, gives
\[
m_{(2\beta+2,\,1)}
=p_{(2\beta+2,\,1)}-p_{2\beta+3}
=e_2p_{2\beta+1}-e_3p_{2\beta}.
\]
Using \cref{p.DE}, we may rewrite this identity as
\begin{equation}\label{lem:two-part-endpoint-coefficient-reduction}
m_{(2\beta+2,\,1)}(y)
\equiv
E_2D_{2\beta+1}
+ \brk1{(2\beta+1)E_2\mathcal E_{2\beta-2}
- D_{2\beta}}E_3
\pmod{E_3^2}.
\end{equation}
We extract the coefficient from each term on the right side of \cref{lem:two-part-endpoint-coefficient-reduction}. 

By \cref{def:Dickson1.sum,lem:marker-extraction}, the first term contributes
\begin{align*}
\lambda!\,[M]E_2D_{2\beta+1}
&=\lambda!\,[M]
\sum_{q=0}^{\beta}
(-1)^q
\frac{2\beta+1}{z_q+1}
\binom{z_q+1}{q}
E_2^{q+1}E_1^{z_q-q+1}
\\
&=\sum_{q=0}^{\beta}
(-1)^q\frac{2\beta+1}{z_q+1}
\binom{z_q+1}{q}(q+1)!(z_q-q+1)!
\brk4{\binom{\beta}{q+1}+3\binom{\beta}{q}}\\
&=\sum_{q=0}^{\beta}
(-1)^q(2\beta+1)
(5\beta+3-2z_q)
\binom{\beta}{q}
z_q!,
\end{align*}
where the last equality follows from \cref{eq:binomial-successor} with $n=\beta$, followed by the substitution $q=2\beta-z_q$. By \cref{def:Dickson2.sum,lem:marker-extraction}, the second term contributes
\begin{align*}
\lambda!\,[M](2\beta+1)E_2\mathcal E_{2\beta-2}E_3
&=\lambda!\,[M](2\beta+1)
\sum_{q=0}^{\beta-1}(-1)^q
\binom{z_q-2}{q}
E_2^{q+1}E_1^{z_q-q-2}E_{3}
\\
&=(2\beta+1)
\sum_{q=0}^{\beta-1}
(-1)^q\binom{z_q-2}{q}
(q+1)!(z_q-q-2)!
\binom{\beta}{q+1}\\
&=(2\beta+1)
\sum_{q=0}^{\beta-1}
(-1)^q(\beta-q)
\binom{\beta}{q}
(z_q-2)!,
\end{align*}
where the last equality follows from \cref{eq:binomial-successor} with $n=\beta$. By \cref{def:Dickson1.sum,lem:marker-extraction}, the third term contributes
\begin{multline*}
-\lambda!\,[M]D_{2\beta}E_3
=
-\lambda!\,[M]
\sum_{q=0}^{\beta}
\frac{2\beta}{z_q}
\binom{z_q}{q}
(-E_2)^q E_1^{z_q-q}
E_3
\\
={}-
\sum_{q=0}^{\beta}
(-1)^q
\frac{2\beta}{z_q}\binom{z_q}{q}
q!(z_q-q)!\binom{\beta}{q}
=-2\beta
\sum_{q=0}^{\beta}
(-1)^q\binom{\beta}{q}
(z_q-1)!.
\end{multline*}
Adding up these contributions, and splitting the first into two, we obtain
\begin{multline*}
\coeff{e_{(2\beta+2,\,1)}}{X_{K_\lambda}}
=(2\beta+1)(5\beta+3)\sum_{q=0}^{\beta}(-1)^q
\binom{\beta}{q}z_q!
-2(2\beta+1)\sum_{q=0}^{\beta}(-1)^q
\binom{\beta}{q}z_q\,z_q!\\
+(2\beta+1)\sum_{q=0}^{\beta-1}(-1)^q
\binom{\beta}{q}(\beta-q)(z_q-2)!
-2\beta\sum_{q=0}^{\beta}(-1)^q
\binom{\beta}{q}(z_q-1)!.
\end{multline*}
For the first sum, we have
\[
\sum_{q=0}^{\beta}(-1)^q\binom{\beta}{q}z_q!
=\beta!a_\beta
\]
by \cref{a.sum}. For the second sum, by \cref{eq:pascal,lem:delta},
\begin{align*}
\sum_{q=0}^{\beta}(-1)^q\binom{\beta}{q}z_qz_q!
&=\sum_{q=0}^{\beta}(-1)^q\binom{\beta}{q}
\brk1{(z_q+1)!-z_q!}\\
&=\sum_{q=0}^{\beta}(-1)^q\binom{\beta}{q}(z_q+1)!
+\sum_{q=1}^{\beta+1}(-1)^q
\binom{\beta}{q-1}(z_q+1)!
\\
&=\sum_{q=0}^{\beta+1}(-1)^q\binom{\beta+1}{q}(z_q+1)!
=\frac{\beta!}{2}\Delta_{\beta+1}.
\end{align*}
For the third sum, by \cref{eq:binomial-right,a.sum} with $n=\beta$,
\[
\sum_{q=0}^{\beta-1}(-1)^q(\beta-q)
\binom{\beta}{q}(z_q-2)!
=\beta\sum_{q=0}^{\beta-1}(-1)^q
\binom{\beta-1}{q}(z_q-2)!
=\beta!\,a_{\beta-1}.
\]
For the last sum, \cref{lem:delta} gives
\[
2\beta\sum_{q=0}^{\beta}(-1)^q
\binom{\beta}{q}(z_q-1)!
=\beta!\Delta_\beta.
\]
Substituting these formulas into the combined alternating sum gives
\[
\coeff{e_{(2\beta+2,\,1)}}{X_{K_\lambda}}
=\beta!\brk2{
(2\beta+1)\brk1{(5\beta+3)a_\beta-\Delta_{\beta+1}+a_{\beta-1}}
{}-\Delta_\beta
}.
\]
By \cref{lem:delta-recurrence}, we have
\[
\Delta_{\beta+1}-a_{\beta-1}=(4\beta+1)a_\beta.
\]
Using also \cref{a.rec}, this becomes
\begin{multline*}
\coeff{e_{(2\beta+2,\,1)}}{X_{K_\lambda}}/\beta!
=(2\beta+1)(\beta+2)a_\beta-a_\beta+a_{\beta-1}\\
=(2\beta^2+5\beta+1)a_\beta+\brk1{a_{\beta+1}-2(2\beta+1)a_\beta}
=a_{\beta+1}+(2\beta^2+\beta-1)a_\beta.
\end{multline*}
This completes the proof.
\end{proof}

We now turn to the remaining length-two partitions $\mu=(a,\,b)$, defined by
\[
a=\beta+2+d
\quad\text{and}\quad
b=\beta+1-d, 
\]
for $0\le d\le\beta-1$. Then $a>b\geq2$. We now develop a uniform first-order expansion of $m_{(a,\,b)}$ for use in extracting the coefficients of these length-two partitions.

\begin{lemma}\label[lemma]{lem:mab-expansion}
Let $a>b\ge2$ and $k=a-b$. Then
\[
m_{(a,\,b)}(y)
\equiv (E_2^2-bE_1E_3)D_{k}E_2^{b-2}
+k(E_1\mathcal E_{k-2}-\mathcal E_{k-1})E_2^{b-1}E_3
\pmod{E_3^2}, 
\]
where $D_i$ and $\mathcal E_i$ are the Dickson polynomials defined in \cref{def:Dickson1.sum,def:Dickson2.sum}, respectively.
\end{lemma}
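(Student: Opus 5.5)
The plan is to write $m_{(a,\,b)}$ in power sums, reduce each power sum modulo $E_3^2$ by \cref{p.DE}, and verify the two resulting identities as polynomial identities in two auxiliary variables. Since $a\neq b$, the definition of monomial symmetric functions gives, for the three-letter alphabet $y$,
\[
m_{(a,\,b)}(y)=p_a(y)p_b(y)-p_{a+b}(y).
\]
Each of $a,b,a+b$ is at least~$2$, so \cref{p.DE} applies to all three power sums. Multiplying out modulo $E_3^2$ gives
\[
m_{(a,\,b)}(y)\equiv \bigl(D_aD_b-D_{a+b}\bigr)
+\bigl(a\,\mathcal E_{a-3}D_b+b\,\mathcal E_{b-3}D_a-(a+b)\mathcal E_{a+b-3}\bigr)E_3 \pmod{E_3^2}.
\]
When $b=2$, the formula for $p_2$ is exact and $\mathcal E_{-1}=0$ by the convention, so this case needs no separate treatment. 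It therefore suffices to prove two identities in $\mathbb Q[E_1,E_2]$:
\[
D_aD_b-D_{a+b}=E_2^bD_k,
\]
and
\[
a\,\mathcal E_{a-3}D_b+b\,\mathcal E_{b-3}D_a-(a+b)\mathcal E_{a+b-3}=-bE_1D_kE_2^{b-2}+k(E_1\mathcal E_{k-2}-\mathcal E_{k-1})E_2^{b-1}.
\]
The first identity gives the $E_2^2D_kE_2^{b-2}$ term of the statement; the second gives the remaining $E_3$ terms.

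Since $E_1,E_2$ are algebraically independent, I will check both identities after the substitution $E_1=u+v$, $E_2=uv$ from \cref{def:Dickson1.uv,def:Dickson2.uv}. Then $D_n=u^n+v^n$ for $n\ge1$ and $(u-v)\mathcal E_n=u^{n+1}-v^{n+1}$, the latter also valid for $n=-1$. The first identity is immediate:
\[
(u^a+v^a)(u^b+v^b)-(u^{a+b}+v^{a+b})=u^av^b+u^bv^a=(uv)^b(u^k+v^k).
\]

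For the second identity, multiply both sides by $u-v$; this is legitimate because $u-v$ is a nonzerodivisor in $\mathbb Q[u,v]$. On the left, the $u^{a+b-2}$ and $v^{a+b-2}$ terms cancel, and after factoring $(uv)^{b-2}$ the left side becomes
\[
a(u^kv^2-u^2v^k)+b(v^{k+2}-u^{k+2}).
\]
On the right, after the same factoring, the first term becomes
\[
-b(u^2-v^2)(u^k+v^k)=-b(u^{k+2}-v^{k+2})-b(u^2v^k-u^kv^2).
\]
The bracket $(u+v)(u^{k-1}-v^{k-1})-(u^k-v^k)$ equals $vu^{k-1}-uv^{k-1}$, so the second term becomes $k(u^kv^2-u^2v^k)$. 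Adding these and using $b+k=a$ reproduces the left side exactly.

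The main obstacle is bookkeeping rather than ideas: keeping track of the degenerate indices ($b=2$, $k=1$ with $\mathcal E_{-1}=0$, and $D_0$ never occurring since $k\ge1$), and ensuring the congruence is taken in $\mathbb Q[E_1,E_2,E_3]/(E_3^2)$ so that the cross term $E_3^2$ from $p_ap_b$ is legitimately discarded.
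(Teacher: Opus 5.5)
Your proposal is correct and follows the paper's proof: write $m_{(a,\,b)}=p_ap_b-p_{a+b}$, reduce each power sum modulo $E_3^2$ by \cref{p.DE}, then verify the two resulting identities in $\mathbb{Q}[u,v]$ after substituting $E_1=u+v$, $E_2=uv$ and clearing the factor $u-v$. The only difference is cosmetic: you collect the $E_3$-terms directly using $b+k=a$, whereas the paper groups them as $(a-b)A+bB$.
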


\begin{proof}
By the definition of monomial symmetric polynomials and \cref{p.DE},
\[
m_{(a,\,b)}(y)=p_a(y) p_b(y)-p_{a+b}(y)
\equiv D_aD_b-D_{a+b}
+L_{a,b}E_3
\pmod{E_3^2},
\]
where
\begin{equation}\label{pf:def:L}
L_{a,b}
=a\mathcal E_{a-3}D_b+b\mathcal E_{b-3}D_a
-(a+b)\mathcal E_{a+b-3}.
\end{equation}

It suffices to prove the following identities after the injective substitution $E_1=u+v$ and $E_2=uv$ into the integral domain~$\mathbb Q[u,v]$. By \cref{def:Dickson1.uv},
\[
D_aD_b-D_{a+b}
=(u^a+v^a)(u^b+v^b)-(u^{a+b}+v^{a+b})
=(uv)^b\brk1{u^{a-b}+v^{a-b}}
=D_{a-b}E_2^b.
\]
It remains to show that
\begin{equation}\label{dsr:m.ab:E3}
L_{a,b}=(a-b)(E_1\mathcal E_{a-b-2}-\mathcal E_{a-b-1})E_2^{b-1}
-bD_{a-b}E_1E_2^{b-2}.
\end{equation}
Indeed, by \cref{def:Dickson2.uv}, for every integer~$j\ge-1$,
\[
(u-v)\mathcal E_j=u^{j+1}-v^{j+1}.
\]
Multiplying \cref{pf:def:L} by~$u-v$ and using \cref{def:Dickson1.uv} and the identity above, we obtain
\begin{align*}
(u-v)L_{a,b}
&=a\brk1{u^{a-2}-v^{a-2}}\brk1{u^b+v^b}
+b\brk1{u^{b-2}-v^{b-2}}\brk1{u^a+v^a}
-(a+b)\brk1{u^{a+b-2}-v^{a+b-2}}\\
&=a\brk1{u^{a-2}v^b-u^bv^{a-2}}+b\brk1{u^{b-2}v^a-u^av^{b-2}}
=E_2^{b-2}\cdotp\brk1{(a-b)A+bB}
\end{align*}
where
\begin{align*}
A
&=u^{a-b}v^2-u^2v^{a-b}
=uv\brk1{
(u+v)\brk1{u^{a-b-1}-v^{a-b-1}}
-\brk1{u^{a-b}-v^{a-b}}
}\\
&=E_2\cdotp(u-v)\brk1{
E_1\mathcal E_{a-b-2}-\mathcal E_{a-b-1}
},\quad\text{and}\\
B&=u^{a-b}v^2-u^2v^{a-b}+v^{a-b+2}-u^{a-b+2}
=(v^2-u^2)\brk1{u^{a-b}+v^{a-b}}
\\
&=-(u-v)D_{a-b}E_1.
\end{align*}
Combining these identities together, we obtain
\begin{align*}
(u-v)L_{a,b}
&=(u-v)E_2^{b-2}\brk1{
(a-b)E_2\brk1{E_1\mathcal E_{a-b-2}-\mathcal E_{a-b-1}}
-bE_1D_{a-b}
}.
\end{align*}
Since~$\mathbb Q[u,v]$ is an integral domain and $u-v$ is nonzero, canceling the factor~$u-v$ yields the desired \cref{dsr:m.ab:E3}, and completes the proof.
\end{proof}

We now apply \cref{lem:mab-expansion} to compute the corresponding coefficient.

\begin{lemma}\label[lemma]{lem:length-two-coefficients}
For any integer $0\leq d\leq\beta-1$, we have $\coeff{e_{(\beta+2+d,\,\beta+1-d)}}{X_{K_\lambda}} = \beta!\,\beta(2d+1)a_d$.
\end{lemma}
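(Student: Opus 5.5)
Set $a=\beta+2+d$, $b=\beta+1-d$, so $k=a-b=2d+1$ and $b\ge2$. By \cref{eq:start-coefficient} the target coefficient is $\lambda!\,\coeff{M}{m_{(a,\,b)}(y)}$. I will expand $m_{(a,\,b)}(y)$ modulo $E_3^2$ with \cref{lem:mab-expansion}, write out the Dickson polynomials through \cref{def:Dickson1.sum,def:Dickson2.sum}, and read off each monomial $E_3^rE_2^sE_1^t$ with \cref{lem:marker-extraction}. This gives four groups of terms:
\begin{itemize}
\item $E_2^{b}D_k$, with $r=0$;
\item $-bE_1E_2^{b-2}D_kE_3$, with $r=1$;
\item $kE_1E_2^{b-1}\mathcal E_{k-2}E_3$, with $r=1$;
\item $-kE_2^{b-1}\mathcal E_{k-1}E_3$, with $r=1$.
\end{itemize}
The expected advantage over the top coefficient is that $k=2d+1$ is small. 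Each Dickson sum then runs only over $q\le d$, with $s=b+q$ or $s=b-2+q$, etc. The binomials $\binom{\beta}{s}$ can be rewritten as $\binom{\beta}{\beta-s}$, where $\beta-s$ is roughly $d-1-q$. So each sum becomes a short alternating sum of length about $d$, of the same shape as \cref{a.sum} with $d$ in place of $\beta$.

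Concretely, in each term I write $s!\,t!\binom{\beta}{s}=\beta!\,t!/(\beta-s)!$. Since $t=k-2q+\cdots$ is small, the factor $t!\,\binom{k-q}{q}$ times the other data should collapse via \cref{eq:binomial-left,eq:binomial-right,eq:binomial-successor} into expressions like $(2d-q')!\binom{d}{q'}/d!$ after reindexing $q'=\beta-s$. The factor $\beta!$ then pulls out, which matches the claimed prefactor $\beta!\,\beta$. I expect the $r=0$ term to contribute two pieces, one with $\binom{\beta}{s}$ and one with $3\binom{\beta}{s-1}$. Both are pure since $E_3$ is absent. Each of the $r=1$ terms contributes a single alternating sum.

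Next I collect everything into sums of the form $\sum_{q}(-1)^q\binom{d}{q}P(q)(2d-q)!$ with $P$ a low-degree polynomial, possibly involving $\beta$. I then use the tricks from the proofs of \cref{lem:coeff.2b+3,lem:two-part-endpoint-coefficient} to reduce to \cref{a.sum} and \cref{lem:delta}. The $\beta$-dependence should factor out as a single $\beta$. The remaining $d$-dependent combination should simplify, perhaps via \cref{a.rec} and \cref{lem:delta-recurrence}, to $(2d+1)a_d$. The case $d=0$ ($k=1$, $\mathcal E_{-1}=0$, $\mathcal E_0=1$, $D_1=E_1$) I will check directly as a sanity test: the answer should be $\beta!\,\beta$.

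The main obstacle is the bookkeeping in the reindexing. The binomials $\binom{\beta}{s}$ with $s$ near $\beta$ must be converted to sums indexed by $\beta-s$ ranging over $0,\dots,d$. Boundary terms where $\beta-s<0$ must vanish. The mixed coefficient $\frac{k}{k-q}\binom{k-q}{q}$ must combine with $t!$ so that $(2d-q)!$ emerges. If a direct match fails, my first fallback is the $u,v$ substitution used in \cref{lem:mab-expansion}. I would compute the extraction functional on $E_2^{b-2+s}E_1^{t}$ as a generating function in $u,v$ and verify the identity for all $d$ by comparing recurrences in $d$ with \cref{a.rec}.
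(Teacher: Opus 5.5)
Your plan matches the paper's proof: the same four groups from \cref{lem:mab-expansion}, extracted term by term with \cref{lem:marker-extraction} and reduced to \cref{a.sum}. When you carry it out, keep the Dickson index~$q$; no reindexing by $\beta-s$ is needed. With $R_q=(-1)^q(2d+1)\,\beta!\,(2d-q)!/\bigl(q!\,(d-q)!\bigr)$, your four groups contribute, in order, $(3b+d+2q)R_q$, $-2bR_q$, $-2(q+1)R_{q+1}$ and $-R_q$. After shifting the index in the third sum, everything collapses to $\beta\sum_{q=0}^{d}R_q=\beta!\,\beta(2d+1)a_d$ by \cref{a.sum} alone, with no appeal to \cref{a.rec} or \cref{lem:delta-recurrence}.
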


\begin{proof}
Set
\[
a=\beta+2+d,
\quad
b=\beta+1-d,
\quad\text{and}\quad
k=a-b=2d+1.
\]
By \cref{eq:start-coefficient,lem:mab-expansion},
\[
\coeff{e_{(a,\,b)}}{X_{K_\lambda}}
=\lambda!\,\coeff{M}{m_{(a,\,b)}(y)}
=\lambda!\,\coeff{M}{(T_0+T_1+T_2+T_3)},
\]
where
\[
T_0=D_kE_2^b,\quad
T_1=-bD_kE_1E_2^{b-2}E_3,\quad
T_2=k\mathcal E_{k-2}E_1E_2^{b-1}E_3,\quad
T_3=-k\mathcal E_{k-1}E_2^{b-1}E_3.
\]
Substituting \cref{def:Dickson1.sum,def:Dickson2.sum} into these polynomials, we obtain
\begin{equation}\label{pf:coeff.l=2.C0123}
\coeff{e_{(a,\,b)}}{X_{K_\lambda}}
=\sum_{q=0}^{d}\brk1{C_{0,q}+C_{1,q}+C_{3,q}}
+\sum_{q=0}^{d-1}C_{2,q},
\end{equation}
where $C_{i,q}$ is the $q$-indexed contribution to $\lambda!\,\coeff{M}{T_i}$. For $0\le q\le d$, set
\[
R_q
=(-1)^q k\,\frac{\beta!}{d!}\binom{d}{q}(2d-q)!
=(-1)^q k\,\frac{\beta!\,(2d-q)!}{q!\,(d-q)!}.
\]

We now compute the contribution of each term~$T_i$ in turn. First, by \cref{def:Dickson1.sum,lem:marker-extraction},
\begin{align*}
C_{0,q}
&=\lambda!\,[M](-1)^q\frac{k}{k-q}\binom{k-q}{q}
E_2^{b+q}E_1^{k-2q}\\
&=(-1)^q\frac{k}{k-q}\binom{k-q}{q}
(b+q)!(k-2q)!
\brk4{\binom{\beta}{b+q}+3\binom{\beta}{b+q-1}}\\
&=(-1)^q
\frac{k(2d-q)!}{q!}
\frac{\beta!}{(d-q)!}\brk1{d-q+3(b+q)}
=(3b+d+2q)R_q.
\end{align*}
Second, by \cref{def:Dickson1.sum,lem:marker-extraction},
\begin{align*}
C_{1,q}
&=\lambda!\,[M](-b)(-1)^q\frac{k}{k-q}\binom{k-q}{q}
E_2^{b+q-2}E_1^{k-2q+1}E_3\\
&=-b(-1)^q\frac{k}{k-q}\binom{k-q}{q}
(b+q-2)!(k-2q+1)!\binom{\beta}{b+q-2}\\
&=-b(-1)^q
\frac{2k(d-q+1)(2d-q)!}{q!}
\frac{\beta!}{(d-q+1)!}
=-2bR_q.
\end{align*}
Third, for $0\le q\le d-1$, \cref{def:Dickson2.sum,lem:marker-extraction} give
\begin{align}
\notag
C_{2,q}
&=\lambda!\,[M]k(-1)^q\binom{2d-1-q}{q}
E_2^{b+q-1}E_1^{2d-2q}E_3\\
\notag
&=(-1)^q k\binom{2d-1-q}{q}
(b+q-1)!(2d-2q)!\binom{\beta}{b+q-1}
\\
\label{pf:C2q}
&=(-1)^q k\frac{2(d-q)(2d-1-q)!}{q!}
\frac{\beta!}{(d-q)!}
=\frac{2(d-q)}{2d-q}R_q
=-2(q+1)R_{q+1}.
\end{align}
Finally, \cref{def:Dickson2.sum,lem:marker-extraction} give
\begin{align*}
C_{3,q}
&=\lambda!\,[M](-k)(-1)^q\binom{2d-q}{q}
E_2^{b+q-1}E_1^{2d-2q}E_3\\
&=-k(-1)^q\binom{2d-q}{q}
(b+q-1)!(2d-2q)!\binom{\beta}{b+q-1}\\
&=-k(-1)^q\frac{(2d-q)!}{q!}
\frac{\beta!}{(d-q)!}
=-R_q.
\end{align*}
It follows that
\[
C_{0,q}+C_{1,q}+C_{3,q}
=\brk1{3b+d+2q-2b-1}R_q
=(\beta+2q)R_q.
\]
Substituting it and \cref{pf:C2q} into \cref{pf:coeff.l=2.C0123}, and using \cref{a.sum}, we obtain
\begin{align*}
\coeff{e_{(a,\,b)}}{X_{K_\lambda}}
&=\sum_{q=0}^{d}(\beta+2q)R_q
-\sum_{q=0}^{d-1}2(q+1)R_{q+1}
=\beta\sum_{q=0}^{d}R_q
\\
&=\beta!\,\beta\frac{2d+1}{d!}
\sum_{q=0}^{d}(-1)^q\binom{d}{q}(2d-q)!
=\beta!\,\beta(2d+1)a_d.
\end{align*}
This completes the proof.
\end{proof}

\subsection{The length-three partitions and completion}
\label{sec:length-three} We now evaluate the coefficients for length-three partitions and complete the proof of the main theorem.

\medskip

\begin{lemma}\label[lemma]{lem:length-three-coefficients}
Let $\beta\ge 1$. For every integer~$d$ satisfying $0 \le d \le \beta$, one has
\[
\coeff{e_{(\beta+d+1,\,\beta+1-d,\,1)}}{X_{K_\lambda}}
=
\begin{dcases*}
\beta!, & if $d=0$,\\
0, & if $d=1$,\\
\beta!\brk1{(4d-3)a_{d-1}+a_{d-2}}, & if $2 \le d \le \beta$.
\end{dcases*}
\]
\end{lemma}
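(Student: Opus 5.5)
We follow the template of \cref{lem:length-two-coefficients}. Set $a=\beta+d+1$, $b=\beta+1-d$ and $k=a-b=2d$. For $d\ge1$ we have $a>b\ge1$, and for $d=0$ we have $a=b=\beta+1$. By \cref{eq:start-coefficient}, the coefficient equals $\lambda!\,\coeff{M}{m_{(a,\,b,\,1)}(y)}$.

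The first step is to write $m_{(a,b,1)}$ in terms of power sums. For distinct $a>b>1$ we use
\[
m_{(a,b,1)}=p_ap_bp_1-p_{a+1}p_b-p_ap_{b+1}-p_{a+b}p_1+2p_{a+b+1}.
\]
When $a=b$ this must be divided by $2$, and when $b=1$ (that is, $d=\beta$) the partition is $(2\beta+1,1,1)$, whose combinatorial factor also changes. A cleaner route is
\[
m_{(a,b,1)}=m_{(a,b)}\,e_1-c_1\,m_{(a+1,b)}-c_2\,m_{(a,b+1)},
\]
where the multiplicity constants $c_1,c_2$ are determined by $\mu$. In this route we substitute $e_1=E_1$, and \cref{lem:mab-expansion} gives $m_{(a,b)}$. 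We also need the analogous expansion of $m_{(a,b)}$ modulo $E_3^2$ when $a=b$ or $b=1$, which comes from \cref{p.DE} and the identity $D_aD_b-D_{a+b}=D_{a-b}E_2^b$. If $a-b$ is even, $m_{(a+1,b)}$ and $m_{(a,b+1)}$ have odd difference $2d\pm1$, so they are again covered by \cref{lem:mab-expansion}. For the edge cases ($d=0$, $d=1$, $d=\beta$) we would instead expand directly in power sums, using \cref{p.DE}.

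The second step is to substitute the Dickson sums \cref{def:Dickson1.sum,def:Dickson2.sum} and apply \cref{lem:marker-extraction} to each monomial $E_3^rE_2^sE_1^t$. As in \cref{lem:length-two-coefficients}, each $q$-indexed contribution should simplify, via \cref{eq:binomial-left,eq:binomial-right,eq:binomial-successor}, to a rational multiple of a single quantity
\[
R_q=(-1)^q\,\beta!\,\frac{(2d-q-1)!}{q!\,(d-q)!}.
\]
This is the natural shape here, because the Dickson index is now $k=2d$ rather than $2d+1$, and $D_{2d}$ carries the factor $\frac{2d}{2d-q}\binom{2d-q}{q}$. We then collect the terms and telescope the shifted sums, as was done with $C_{2,q}=-2(q+1)R_{q+1}$. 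The aim is to reduce everything to sums of the form $\sum_q(-1)^q\binom{d}{q}(2d-q-1)!$ and $\sum_q(-1)^q\binom{d-1}{q}(2d-q-2)!$. By \cref{lem:delta,a.sum}, these are $\frac{d!}{2d}\Delta_d$ and $(d-1)!\,a_{d-1}$, respectively. The final step uses \cref{lem:delta-recurrence}, $\Delta_d=(4d-3)a_{d-1}+a_{d-2}$, to bring the answer to the stated form. This also explains the value $0$ at $d=1$, since $\Delta_1=0$.

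The case $d=0$ gives $\mu=(\beta+1,\beta+1,1)$. Here $D_0=2$ and no Dickson sum survives except a constant, so the coefficient should come out as $\beta!$ directly from \cref{lem:marker-extraction} applied to a few monomials such as $E_2^{\beta+1}E_1$ and $E_2^{\beta}E_1^{0}E_3$. We would check this separately.

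The main obstacle is the bookkeeping in the second step. There are several terms, each with its own Dickson sum and a different binomial $\binom{\beta}{s}$ or $\binom{\beta}{s-1}$ from \cref{lem:marker-extraction}. We must verify that all $\beta$-dependence cancels, since the target $\Delta_d\,\beta!$ has only the prefactor $\beta!$, whereas the length-two answer carried a factor $\beta$. We would first check the claimed combination numerically against the expansions for $\beta=1,2,3$ displayed in the introduction, for example $12=\beta!(5a_1+a_0)\cdot$ at $\beta=2$, $d=2$, which gives $2\cdot6=12$. This check also fixes the constants $c_1,c_2$ before carrying out the general simplification.
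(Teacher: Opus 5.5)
Your proposal is a plan, not a proof: it stops exactly where the proof has to happen. After choosing $m_{(a,b,1)}=m_{(a,b)}e_1-c_1m_{(a+1,b)}-c_2m_{(a,b+1)}$, every step is stated as an expectation. Each contribution ``should simplify'' to a multiple of $R_q$; ``we must verify that all $\beta$-dependence cancels''; $d=0$ is left to ``check separately''.

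The constants $c_1,c_2$ are to be fixed by matching the expansions for $\beta\le3$. They follow from the product rule instead. For $1\le d\le\beta-1$ one has $c_1=c_2=1$. For $b=1$, the monomial $m_{(a,1,1)}$ occurs with coefficient $2$ in $m_{(a,1)}e_1$, so your displayed form needs an overall factor $\tfrac12$ there. No finite numerical check can prove the identity for all $\beta$ and $d$.

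The route is not wrong in principle, since every identity involved is exact modulo $E_3^2$, but it is heavy. The $E_3$-free parts of your three terms are $E_1D_{2d}E_2^b$, $D_{2d+1}E_2^b$ and $D_{2d-1}E_2^{b+1}$. These have $r=0$, so they pick up the extra $3\binom{\beta}{s-1}$ term of \cref{lem:marker-extraction}. They cancel only because $D_{2d+1}=E_1D_{2d}-E_2D_{2d-1}$, and the surviving $E_3$-parts collapse only after further Dickson identities. None of this is carried out.

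The missing idea is that $y=(y_1,y_2,y_3)$ has exactly three letters. Hence every monomial of a length-three monomial symmetric polynomial contains $y_1y_2y_3=E_3$, giving $m_{(a,b,1)}(y)=E_3\,m_{(a-1,b-1)}(y)$ (read $m_{(2\beta,0)}=p_{2\beta}$ when $d=\beta$). Modulo $E_3^2$ you may evaluate the cofactor at $E_3=0$, that is at $y_3=0$. There it equals $E_2^{\beta-d}D_{2d}$ for $d\ge1$ and $E_2^{\beta}$ for $d=0$.

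Now only the case $r=1$ of \cref{lem:marker-extraction} occurs. With $s=\beta-d+q$ and $t=2d-2q$ one has $s!\binom{\beta}{s}=\beta!/(d-q)!$, so the prefactor $\beta!$ splits off automatically. The $q$th term is then exactly $2d\,R_q$ in your notation, and summing gives $\beta!\,\Delta_d$ by \cref{lem:delta}. \Cref{lem:delta-recurrence}, together with $\Delta_1=0$, then yields the stated values for $d\ge1$. For $d=0$ one gets $\lambda!\,[M]E_2^\beta E_3=\beta!$ at once. This factorization removes the edge cases, the constants $c_1,c_2$, and all the cancellation bookkeeping.
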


\begin{proof}
We first consider $d=0$, for which the partition is $(\beta+1,\,\beta+1,\,1)$. By definition,
\[
m_{(\beta+1,\,\beta+1,\,1)}(y)
=\brk1{(y_2y_3)^{\beta}
+(y_1y_3)^{\beta}
+(y_1y_2)^{\beta}}E_3.
\]
After setting $E_3=0$, one root is zero and the product of the other two roots is~$E_2$. Hence
\[
m_{(\beta+1,\,\beta+1,\,1)}(y)
\equiv
E_2^\beta E_3
\pmod{E_3^2}.
\]
Applying \cref{eq:start-coefficient,lem:marker-extraction} gives $\coeff{e_{(\beta+1,\,\beta+1,\,1)}}{X_{K_\lambda}} = \beta!$, which proves the case $d=0$.

Now let $1\le d\le\beta$, and set
\[
a=\beta+d+1
\quad\text{and}\quad
b=\beta+1-d.
\]
By the definition of monomial symmetric polynomials,
\[
m_{(a,\,b,\,1)}(y)=m_{(a-1,\,b-1)}(y)E_3.
\]
As a symmetric polynomial, $m_{(a-1,\,b-1)}(y)$ is a polynomial in $E_1,E_2,E_3$. Its difference from its specialization at $E_3=0$ is therefore divisible by~$E_3$. Thus replacing this factor by its specialization changes $m_{(a,\,b,\,1)}(y)$ only by a multiple of~$E_3^2$.

By the definition of~$y$, its entries are the roots of the polynomial
\[
z^3-E_1z^2+E_2z-E_3.
\]
After setting $E_3=0$, this polynomial becomes
\[
z\brk1{z^2-E_1z+E_2}.
\]
Thus one root is zero, and after relabeling the roots we may take $y_3=0$. The remaining roots $y_1$ and~$y_2$ satisfy 
\[
y_1+y_2=E_1 
\quad\text{and}\quad
y_1y_2=E_2.
\]
Under this specialization, the identities 
\[
b-1=\beta-d
\quad\text{and}\quad
a-b=2d,
\]
together with \cref{def:Dickson1.uv}, give
\[
m_{(a-1,\,b-1)}(y)
=
\brk1{y_1y_2}^{b-1}
\brk1{y_1^{a-b}+y_2^{a-b}}
=
E_2^{\beta-d}D_{2d}.
\]
Returning to the exact factorization of~$m_{(a,\,b,\,1)}(y)$ above gives
\[
m_{(a,\,b,\,1)}(y)
\equiv E_2^{\beta-d}D_{2d}E_3
\pmod{E_3^2}.
\]
By \cref{def:Dickson1.sum,eq:start-coefficient,lem:marker-extraction,lem:delta}, we find
\begin{align*}
\coeff{e_{(\beta+d+1,\,\beta+1-d,\,1)}}{X_{K_\lambda}}
&=\lambda!\,[M]
E_2^{\beta-d}E_3
\sum_{q=0}^{d}
\frac{2d}{2d-q}
\binom{2d-q}{q}
(-E_2)^q E_1^{2d-2q}
\\
&=
\sum_{q=0}^{d}
(-1)^q
\frac{2d}{2d-q}
\binom{2d-q}{q}
(\beta-d+q)!
(2d-2q)!
\binom{\beta}{\beta-d+q}\\
&=
\sum_{q=0}^{d}
(-1)^q
\frac{\beta!}{(d-q)!}
\frac{2d(2d-1-q)!}{q!}
=\beta!\Delta_d.
\end{align*}
By \cref{lem:delta}, we have
\[
\Delta_d=\begin{dcases*}
0, & if $d=1$, \\
(4d-3)a_{d-1}+a_{d-2}, & if $2\le d\le \beta$.
\end{dcases*}
\]
This proves the remaining cases in \cref{lem:length-three-coefficients}.
\end{proof}

We can now assemble the coefficient families and prove the main theorem.

\begin{proof}[Proof of \cref{thm:e+.cmg}]
By \cref{lem:support-restriction}, no coefficient indexed by a partition of length at least~$4$ can be nonzero, and every nonzero coefficient indexed by a length-three partition~$\mu$ has $\mu_3=1$. Consequently, the coefficient families treated in \cref{lem:length-three-coefficients,lem:length-two-coefficients,lem:coeff.2b+3,lem:two-part-endpoint-coefficient} are exhaustive. These lemmas determine all remaining coefficients and yield \cref{eq:main-expansion}. By the definition of the numbers $a_d$, every coefficient in \cref{eq:main-expansion} is manifestly nonnegative. Hence the graph~$K_{(3,\,2^\beta)}$ is $e$-positive.
\end{proof}

\appendix

\section{Acyclic-orientation interpretations and consequences}
\label{sec:orientation-appendix}

Recall that an orientation of a graph is \emph{acyclic} if the resulting directed graph has no directed cycles, and a \emph{sink} is a vertex with no outgoing arc. We collect three known orientation results and then apply them to the coefficient formulas proved above. First, \citet[Theorem~3.3]{stanley1995} proved the following sink theorem for chromatic symmetric functions.

\begin{theorem}[\citeauthor{stanley1995}'s sink theorem]
\label[theorem]{thm:stanley-sink} Let $G$ be a graph on $n$ vertices, and write
\[
X_G=\sum_{\nu\vdash n}c_\nu e_\nu.
\]
Then for each $1\le j\le n$, the number of acyclic orientations of~$G$ with exactly $j$ sinks is
\[
\sum_{\substack{\nu\vdash n,\ \ell(\nu)=j}}c_\nu.
\]
\end{theorem}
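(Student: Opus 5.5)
We would follow Stanley's route and reduce the statement to one specialization identity. Let $\omega$ be the standard involution with $\omega(e_\nu)=h_\nu$. The complete homogeneous functions $h_1,h_2,\dots$ are algebraically independent, so for an indeterminate $t$ there is a unique ring homomorphism
\[
\psi\colon\Lambda\to\mathbb Q[t],\qquad \psi(h_k)=t\quad(k\ge1).
\]
Then $\psi(h_\nu)=t^{\ell(\nu)}$, and therefore
\[
\psi(\omega X_G)=\sum_{\nu\vdash n}c_\nu t^{\ell(\nu)}.
\]
So the theorem is equivalent to
\[
\psi(\omega X_G)=\sum_{o}t^{\operatorname{sink}(o)},
\]
where the sum runs over all acyclic orientations~$o$ of~$G$. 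The rest of the plan is to prove this identity.

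\emph{Step 1.} We would first obtain a combinatorial expansion of $\omega X_G$. We would use Stanley's reciprocity, which expresses $\omega X_G$ as a sum over pairs $(o,\kappa)$, where $o$ is an acyclic orientation and $\kappa\colon V\to\mathbb Z_{>0}$ is weakly compatible with~$o$. Equivalently, for each~$o$ we take the poset $P_o$ generated by the arcs, and $\omega X_G$ is the sum over~$o$ of the $P_o$-partition generating functions, written in Gessel's fundamental quasisymmetric functions~$F_{n,S}$. The cleanest way to obtain this is through the monomial expansion of~$X_G$ by stable partitions together with $\omega$ applied to fundamental quasisymmetric functions. Alternatively one can use the known identity $\omega X_G(1^m)=(-1)^n\chi_G(-m)$ together with its refinement by colors.

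\emph{Step 2.} Next we would interpret~$\psi$ as a super-specialization. Its generating function is
\[
\sum_k\psi(h_k)z^k=\frac{1-(1-t)z}{1-z},
\]
which corresponds to one ordinary letter of weight~$1$ and one ``signed'' letter of weight~$-(1-t)$. Extending~$\psi$ to quasisymmetric functions in the standard way, $\psi(F_{n,S})$ becomes an explicit polynomial in~$t$ depending only on~$S$. Concretely, it counts words in the two letters that are weakly increasing, with the signed letter forced to be strict across descents.

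\emph{Step 3, the main obstacle.} We would then sum $\psi$ over the linear extensions of each $P_o$ and show that the total contribution of $o$ collapses to $t^{\operatorname{sink}(o)}$. The approach we would try first is a sign-reversing involution on the colorings with values in the two-letter alphabet. The idea is to toggle the color of a chosen non-sink vertex, for example the minimal one in a fixed vertex order, between the two letters. The fixed points should be the colorings in which only sinks may carry the $t$-letter, and these sum to $t^{\operatorname{sink}(o)}$. Checking that this toggle preserves compatibility is where the care is needed. If the involution resists, we would fall back on the chromatic polynomial: specializing $\omega X_G$ at $m$ ones gives $(-1)^n\chi_G(-m)$, so by Stanley's reciprocity $\chi_G(-m)$ counts pairs of an acyclic orientation and a compatible coloring. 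We would then take the derivative-type limit $m\to0$ with the sink weight tracked through a one-letter refinement. Comparing coefficients of $t^j$ on both sides of the identity gives the stated count.
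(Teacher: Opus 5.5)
The paper does not prove this statement; it quotes Stanley's Theorem~3.3. Your reduction to $\psi(\omega X_G)=\sum_o t^{\operatorname{sink}(o)}$ and your Step~1 are fine. The gap is that Step~3 is never carried out, and the Step~2 rule it would rest on is stated backwards. With Gessel's $F_{n,S}$ (strict at $j\in S$), the signed letter must obey the opposite rule from the ordinary letter: it may repeat \emph{only} across descents. Read literally, your rule makes both letters strict across descents, which is just the ordinary evaluation $F_{n,S}(1,t-1)$. Then $F_{n,\varnothing}=h_n$ would go to $\sum_{b=0}^{n}(t-1)^b$ rather than $t$. For $G=K_2$ you would get $2\bigl(1+(t-1)+(t-1)^2\bigr)\neq 2t$. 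So under the stated rule the per-orientation collapse you want is false, and the toggling involution is only conjectured (\emph{should}, \emph{where the care is needed}).

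The gap closes without any involution once you work with colorings rather than linear extensions. Use the super version of the fundamental lemma of $P$-partitions, with letters ordered $\bar{1}<1$ and weighted $t-1$ and $1$. Then $\psi(K_o)$ is the sum of $(t-1)^{\lvert\kappa^{-1}(\bar{1})\rvert}$ over maps $\kappa\colon V\to\{\bar{1},1\}$ with $\kappa(u)\ge\kappa(v)$ for every arc $u\to v$, where equality is allowed only for the ordinary letter. Suppose $\kappa(u)=\bar{1}$ for some arc $u\to v$. Then $\kappa(v)\le\bar{1}$ forces $\kappa(v)=\bar{1}$, which is forbidden. Hence every non-sink carries $1$, sinks are unconstrained, and $\psi(K_o)=\bigl(1+(t-1)\bigr)^{\operatorname{sink}(o)}=t^{\operatorname{sink}(o)}$. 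The opposite letter order gives $t^{\operatorname{source}(o)}$ instead; reversing all arcs shows the total is the same.

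Your fallback cannot work, because the sink distribution is not determined by $\chi_G$. The graphs $P_4$ and $K_{1,3}$ both have chromatic polynomial $q(q-1)^3$, but their acyclic orientations have sink counts $(4,4,0)$ and $(4,3,1)$ for $j=1,2,3$.

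If you want to avoid quasisymmetric functions entirely, apply $\varphi(e_k)=t$ to $X_G=\sum_{S\subseteq E}(-1)^{\lvert S\rvert}p_{\lambda(S)}$, using $\varphi(p_m)=(-1)^{m-1}\bigl(1-(1-t)^m\bigr)$. This gives $\varphi(X_G)=\sum_W(t-1)^{\lvert W\rvert}a(G-W)$, summed over independent sets $W$, where $a(H)=(-1)^{\lvert V(H)\rvert}\chi_H(-1)$ counts acyclic orientations. Exactly $a(G-W)$ acyclic orientations have every vertex of $W$ as a sink, so this equals $\sum_o\sum_{W\subseteq\operatorname{Sinks}(o)}(t-1)^{\lvert W\rvert}=\sum_o t^{\operatorname{sink}(o)}$.
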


Reversing every arc in an acyclic orientation converts a unique source into a unique sink. Thus, \citet[Theorem~7.3]{greene-zaslavsky1983} yields the following fixed-sink interpretation of the linear coefficient of the chromatic polynomial.

\begin{theorem}[\citeauthor{greene-zaslavsky1983}]
\label[theorem]{thm:greene-zaslavsky-fixed-sink} Let $G$ be a connected graph on $n$ vertices, let $\chi_G(q)$ be its chromatic polynomial, and fix a vertex~$v\in V(G)$. The number of acyclic orientations of~$G$ with unique sink~$v$ is
\[
(-1)^{n-1}\coeff{q}{\chi_G(q)}.
\]
In particular, this number is independent of the choice of~$v$.
\end{theorem}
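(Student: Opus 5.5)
We prove the statement by induction on the number of edges of the connected graph~$G$, using deletion--contraction. We show that both sides obey the same recurrence and agree on the base case. Write
\[
b(G)=(-1)^{n-1}\coeff{q}{\chi_G(q)}
\]
and let $a(G,v)$ be the number of acyclic orientations of~$G$ with unique sink~$v$. We may assume $G$ is simple, since parallel edges can be merged without changing either quantity. The base case is the one-vertex graph, where $\chi_G(q)=q$ and hence $b(G)=1=a(G,v)$. Independence of~$v$ then follows at once, because $b(G)$ does not involve~$v$.

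For the recurrence on the chromatic side, fix an edge~$e$. Deletion--contraction gives $\chi_G=\chi_{G-e}-\chi_{G/e}$, and $G/e$ has $n-1$ vertices, so the sign change yields
\[
b(G)=b(G-e)+b(G/e).
\]
Suppose $e$ is a bridge. Then $G-e$ is disconnected, so $\chi_{G-e}$ is divisible by~$q^2$ and $b(G-e)=0$. This leaves $b(G)=b(G/e)$. If $e$ is not a bridge, both $G-e$ and $G/e$ are connected and the induction hypothesis applies to each.

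For the orientation side we need the matching relations. In the bridge case we want $a(G,v)=a(G/e,v)$, where $v$ is interpreted as the merged vertex if $v\in e$. In an acyclic orientation with unique sink~$v$, a bridge must point toward the component of $G-e$ containing~$v$. Otherwise the other component would contain a sink, since every finite acyclic digraph has one. Contracting the bridge is therefore a bijection.

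In the non-bridge case we want $a(G,v)=a(G-e,v)+a(G/e,v)$. Write $e=xy$, and for $O\in A(G,v)$ call $e$ \emph{reversible} if reversing it keeps $O$ acyclic. The plan is the following map.
\begin{itemize}
\item If $e$ is not reversible, send $O$ to $O-e$. Here there is a directed path between $x$ and~$y$ avoiding~$e$, so deleting $e$ creates no new sink and $O-e\in A(G-e,v)$.
\item If $e$ is reversible, send $O$ to $O/e$, which is acyclic because no other directed $x$--$y$ path exists and still has unique sink~$v$. Here we restrict to a canonical direction of~$e$ so that reversible pairs are not double counted.
\end{itemize}
One must still check that this is a bijection onto $A(G-e,v)\sqcup A(G/e,v)$. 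For the inverse on $A(G-e,v)$, note that $O'\in A(G-e,v)$ admits at least one acyclic extension across~$e$. If exactly one extension is acyclic, we get a non-reversible preimage. If both are acyclic, then $e$ is reversible and the pair corresponds to an element of $A(G/e,v)$. This clashes with the deletion side, and the bookkeeping must be arranged so that each $O'$ is hit exactly once.

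I expect this bijection to be the main obstacle, specifically the case where both orientations of~$e$ are acyclic. The natural fix is to follow Greene and Zaslavsky and choose $e$ incident to~$v$, say $e=vy$. Then in any $O\in A(G,v)$ the edge is forced to be oriented $y\to v$, because $v$ is a sink. The split becomes whether $y$ has another out-edge in $O$: if it does, $O-e$ has unique sink~$v$, and if it does not, contracting~$e$ gives an element of $A(G/e,v)$. The inverses are then unambiguous. Orient $e$ as $y\to v$ in the first case. In the second, uncontract by letting every edge at the merged vertex that came from~$y$ point into~$y$, except the edge~$e$. Since $G$ is connected with at least one edge, such an edge~$e$ at~$v$ always exists, and it may or may not be a bridge, which is handled above. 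This completes the induction.
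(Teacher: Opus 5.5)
Your final argument, with the edge $e=vy$ chosen at $v$, is correct. It takes a genuinely different route from the paper, which gives no proof of its own: it quotes \citet[Theorem~7.3]{greene-zaslavsky1983}, stated there for a unique \emph{source}, and passes to sinks by reversing all arcs. As far as I know, Greene and Zaslavsky derive that theorem from their hyperplane-arrangement machinery, so describing your recursion as ``following Greene and Zaslavsky'' is a misattribution. What you give is the standard self-contained deletion--contraction proof. Its two ingredients both hold: the identity $b(G)=b(G-e)+b(G/e)$, and the bijection $A(G,v)\cong A(G-e,v)\sqcup A(G/e,v)$, split by whether $y$ has an out-arc besides $y\to v$. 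The checks you leave implicit also go through:
\begin{itemize}
\item Adding $y\to v$ to an element of $A(G-e,v)$ creates no cycle because $v$ is a sink, and $y$ already has an out-arc there.
\item After contraction, every edge at the merged vertex points into it. In particular, the two edges from a common neighbour of $v$ and $y$ agree, so simplifying parallel edges is harmless.
\item Hence the merged vertex lies on no directed cycle, and no other vertex loses its out-arcs.
\end{itemize}
Your route buys an elementary proof that works directly with sinks and gives an explicit bijection. The citation buys brevity and places the fact inside a general theory, which is all the paper needs for the alternative proof of \cref{lem:coeff.2b+3}.

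For a clean write-up, drop the first ``reversible edge'' map for an arbitrary edge rather than patching its bookkeeping. As stated it is not well defined: reversing $x\to y$ can make $x$ a second sink, so reversible orientations in $A(G,v)$ need not come in pairs. The general bridge paragraph is likewise only sketched. One would still have to check that the far endpoint of the bridge has no other out-arc, and that uncontracting creates no sink at the near endpoint. It also becomes unnecessary. If $e=vy$ is a bridge, then $A(G-e,v)=\varnothing$, because a disconnected acyclic digraph has a sink in every component. This matches $b(G-e)=0$, so your edge-at-$v$ bijection handles bridges and non-bridges uniformly.
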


For complete multipartite graphs, the following enumeration is a specialization of the general formula of \citet[Theorem~3.4]{carballosa-reyes-khera2025}; it is also recorded in OEIS sequence A033815~\cite{oeis2026}.

\begin{proposition}\label[proposition]{thm:ad-acyclic-orientations}
For every integer~$d\ge1$, the number of acyclic orientations of the complete multipartite graph~$K_{2^d}$ is~$d!\,a_d$.
\end{proposition}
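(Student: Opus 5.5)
The plan is to count acyclic orientations through the chromatic polynomial and reduce the resulting alternating sum to \cref{a.sum}. The tool is Stanley's classical reciprocity theorem: for a graph~$G$ on $n$ vertices, the number of acyclic orientations equals $(-1)^n\chi_G(-1)$. This result is not stated in the paper and has to be imported. It is consistent with \cref{thm:stanley-sink}, since summing over $j$ there counts all acyclic orientations as $\sum_\nu c_\nu$. So the proof has two parts: compute $\chi_{K_{2^d}}(q)$ in closed form, then evaluate it at $q=-1$.

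For the first step, I would expand the chromatic polynomial over partitions of the vertex set into independent sets, namely
\[
\chi_G(q)=\sum_{\pi}(q)_{\lvert\pi\rvert},
\]
where $\pi$ ranges over partitions of $V(G)$ into nonempty independent blocks and $(q)_m=q(q-1)\dotsm(q-m+1)$ is the falling factorial. In $K_{2^d}$ every independent set lies inside a single partite set $\{v_i,v_i'\}$. Hence each part is either kept as one block $\{v_i,v_i'\}$ or split into two singletons. If exactly $j$ parts are kept whole, the partition has $2d-j$ blocks, and this gives
\[
\chi_{K_{2^d}}(q)=\sum_{j=0}^{d}\binom{d}{j}(q)_{2d-j}.
\]

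For the second step, I would set $q=-1$ and use $(-1)_m=(-1)(-2)\dotsm(-m)=(-1)^m m!$. Since $n=2d$, this yields
\[
(-1)^{2d}\chi_{K_{2^d}}(-1)=\sum_{j=0}^{d}(-1)^{2d-j}\binom{d}{j}(2d-j)!=\sum_{j=0}^{d}(-1)^j\binom{d}{j}(2d-j)!.
\]
By \cref{a.sum}, this sum equals $d!\,a_d$, which proves the claim.

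The only real obstacle is justifying Stanley's reciprocity theorem, which the paper quotes only in its sink-refined form. If a self-contained route is preferred, I would apply \cref{thm:general-coefficient} with $\eta=(2^d)$ and sum the $e$-coefficients: the specialization that evaluates every $e_\mu$ to~$1$ turns $\prod_a(1+x_cy_a)$ into $\prod_a(1+y_a)=1+E_1+E_2$. The count then becomes $\eta!\,\coeff{u^\eta}{\sum_{m}(E_1+E_2)^m}$ restricted to total degree $2d$, and this should reproduce the same inclusion–exclusion sum. Choosing the order of summation there is the step I would check most carefully.
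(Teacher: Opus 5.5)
Your proof is correct, and it takes a genuinely different route from the paper. The paper gives no derivation: it specializes the general formula of Carballosa, Reyes, and Khera for acyclic orientations of complete multipartite graphs, which is also recorded in OEIS A033815. You instead combine Stanley's identity that $G$ has $(-1)^n\chi_G(-1)$ acyclic orientations with the stable-partition expansion $\chi_G(q)=\sum_\pi (q)_{\lvert\pi\rvert}$. Since every independent set of $K_{2^d}$ lies in one partite set, this lands exactly on the alternating sum of \cref{a.sum}, so no matching of sequences is needed.

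The same computation works for arbitrary $K_\eta$: a part of size $\eta_i$ splits into $k_i$ blocks in $S(\eta_i,k_i)$ ways. So you essentially rederive the cited general formula. Your argument gains self-containment at almost no cost, while the paper's citation gains only brevity.

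You also need not import the reciprocity theorem from outside. Summing \cref{thm:stanley-sink} over $j$ shows that the number of acyclic orientations is $\sum_\nu c_\nu$. The principal specialization $X_G\mapsto\chi_G(q)$, $e_k\mapsto\binom{q}{k}$, evaluated at $q=-1$ where $\binom{-1}{k}=(-1)^k$, then gives $\chi_G(-1)=(-1)^n\sum_\nu c_\nu$.

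Your backup sketch, however, has a sign slip. The evaluation $e_k\mapsto 1$ is not induced by specializing the variables $x_c$; your step $\prod_a(1+x_cy_a)\mapsto\prod_a(1+y_a)$ corresponds to $x=(1,0,0,\dots)$, which computes $\chi_G(1)$. Applied to $\prod_a\sum_k e_k(x)y_a^k$, the evaluation $e_k\mapsto 1$ gives $\prod_a(1-y_a)^{-1}=(1-E_1+E_2)^{-1}$. Hence the count is $\eta!\,[u^\eta]\sum_m(E_1-E_2)^m$, or equivalently $\sum_m(-1)^m(E_1+E_2)^m$, which is $(1+E_1+E_2)^q$ at $q=-1$. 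As you wrote it, $\sum_m(E_1+E_2)^m$ gives $3$ rather than $1$ for $d=1$.

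With the sign fixed, the extraction as in \cref{lem:marker-extraction} gives $[u^\eta]E_1^{2d-2s}E_2^s=\binom{d}{s}s!\,(2d-2s)!/2^d$. The count then becomes $\sum_s(-1)^s\binom{d}{s}(2d-s)!$, which is again \cref{a.sum}.
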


We can now give an alternative proof of  \cref{lem:coeff.2b+3}.
\begin{proof}
Let $n=2\beta+3$. Let~$A$ be the size-$3$ partite set of~$K_\lambda$, and fix a vertex~$s\in A$. By \cref{thm:stanley-sink}, the coefficient $\coeff{e_n}{X_{K_\lambda}}$ counts the acyclic orientations with a unique sink, while \cref{thm:greene-zaslavsky-fixed-sink} shows that each vertex occurs equally often as that sink. It therefore suffices to count the orientations with unique sink~$s$ and multiply by~$n$. Deleting $s$, and conversely restoring it as a sink, gives a bijection with the acyclic orientations of~$K_{2^{\beta+1}}$ having no sink in $A\backslash\{s\}$. In an acyclic orientation of a complete multipartite graph, all sinks lie in one partite set. Since the $\beta+1$ partite sets of~$K_{2^{\beta+1}}$ are symmetric, exactly a fraction $\beta/(\beta+1)$ of its acyclic orientations satisfy this condition. Therefore, \cref{thm:ad-acyclic-orientations} yields
\[
\coeff{e_n}{X_{K_\lambda}}
=n\cdot\frac{\beta}{\beta+1}\cdot(\beta+1)!a_{\beta+1}
=\beta!\,\beta(2\beta+3)a_{\beta+1}.
\]
This proves \cref{lem:coeff.2b+3}.
\end{proof}

As a pleasant coincidence, the sum of the length-three coefficients telescopes to $\beta!\,a_\beta$, yielding the following orientation interpretation.

\begin{corollary}\label[corollary]{cor:three-sink-orientations}
For every $\beta\ge1$, the number of acyclic orientations of the complete multipartite graph $K_{(3,\,2^\beta)}$ with exactly three sinks is $\beta!\,a_\beta$.
\end{corollary}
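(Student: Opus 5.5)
By \cref{thm:stanley-sink}, the number of acyclic orientations of $K_{(3,\,2^\beta)}$ with exactly three sinks equals the sum of the coefficients $c_\nu$ over all $\nu\vdash 2\beta+3$ with $\ell(\nu)=3$. By \cref{lem:support-restriction}, every such nonzero coefficient is indexed by a partition with $\nu_3=1$, i.e., by $(\beta+d+1,\,\beta+1-d,\,1)$ for $0\le d\le\beta$. It therefore suffices to sum the values given in \cref{lem:length-three-coefficients} over $d=0,\dots,\beta$.

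To do this cleanly, I would use the intermediate form obtained in the proof of \cref{lem:length-three-coefficients} rather than the final case split. That proof shows the coefficient equals $\beta!\,\Delta_d$ for every $1\le d\le\beta$, and the cases $d=1$ and $d\ge2$ there are just evaluations of $\Delta_d$ via \cref{lem:delta-recurrence}. The coefficient for $d=0$ is $\beta!=\beta!\,a_0$, since $a_0=1$. Hence the total is
\[
\beta!\Bigl(a_0+\sum_{d=1}^{\beta}(a_d-a_{d-1})\Bigr)=\beta!\,a_\beta,
\]
which is a telescoping sum.

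There is no real obstacle here. The only point needing care is to confirm that the support restriction excludes every other length-three partition, such as those with $\nu_3\ge2$. This is exactly the content of \cref{lem:support-restriction}, so the list $d=0,\dots,\beta$ is exhaustive and pairwise distinct. If one prefers to work only from the stated values $(4d-3)a_{d-1}+a_{d-2}$, one would first rewrite them as $\Delta_d$ using \cref{lem:delta-recurrence} with $d-1$ in place of $d$, which is valid for $d\ge2$, and then telescope as above.
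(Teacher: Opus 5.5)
Your proposal is correct and follows essentially the same route as the paper: Stanley's sink theorem, then the support restriction, then the length-three coefficients rewritten as $\beta!\,\Delta_d$ and telescoped. The only cosmetic difference is bookkeeping: you start the telescoping sum at $d=1$, absorbing $\Delta_1=0$ and using $a_0=1$, whereas the paper starts at $d=2$ and uses $a_1=1$.
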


\begin{proof}
By \cref{thm:stanley-sink}, the desired number is the sum of the length-three $e$-coefficients of~$X_{K_\lambda}$. The support restriction in \cref{lem:support-restriction} and the formulas in \cref{lem:length-three-coefficients} give these coefficients; moreover, \cref{lem:delta-recurrence}, with $d$ replaced by $d-1$, gives
\[
(4d-3)a_{d-1}+a_{d-2}=\Delta_d=a_d-a_{d-1}
\qquad\text{for $d\ge2$}.
\]
\begin{samepage}
Consequently, with the sum understood to be empty when $\beta=1$, we have
\begin{align*}
\sum_{\mu\vdash2\beta+3,\ \ell(\mu)=3}
\coeff{e_\mu}{X_{K_\lambda}}
&=\beta!\brk4{1+\sum_{d=2}^{\beta}
\brk1{(4d-3)a_{d-1}+a_{d-2}}}\\
&=\beta!\brk4{1+\sum_{d=2}^{\beta}(a_d-a_{d-1})}
=\beta!\,a_\beta,
\end{align*}
where the last equality uses $a_1=1$.
\end{samepage}
\end{proof}

\end{document}